\def\qedbox{\hbox{\vbox{\hrule\hbox{\vrule\kern3pt\vbox{\kern6pt}\kern3pt\vrule}\hrule}}}
\def\eqed{\eqno\hbox{\quad\qedbox}}
\numberwithin{equation}{section}
\newtheorem{theorem}{Theorem}[section]
\newtheorem{lemma}[theorem]{Lemma}
\newtheorem{proposition}[theorem]{Proposition}
\newtheorem{corollary}[theorem]{Corollary}
\newtheorem{setting}[theorem]{Setting and Notation}
\theoremstyle{definition}
\newtheorem{definition}[theorem]{Definition}
\newtheorem{def-prop}[theorem]{Definition-Proposition}
\newtheorem{remark}[theorem]{Remark}
\newtheorem{example}[theorem]{Example}
\newtheorem*{Mysketch}{Sketch of proof} 
  {\pushQED{\qed}\begin{Mysketch}}
  {\popQED\end{Mysketch}}
\DeclareMathOperator{\inid}{in}
\DeclareMathOperator{\ini}{in_{\tau}}
\def\m{{\mathfrak m}}
\newcommand{\init}{{\rm{in}_{\tau'}}}
\def\lt#1{\mathop{\rm in}_{\tau}\nolimits(#1)}
\def\Rini{\hbox{\ensuremath{{\mathcal R}(\lt I)}}}
\def\Ri{\hbox{\ensuremath{{\mathcal R}(I)}}}
\def\Fini{\hbox{\ensuremath{{\mathcal F}(\lt I)}}}
\def\Fi{\hbox{\ensuremath{{\mathcal F}(I)}}}
\def\Sym{\hbox{\ensuremath{{\mathcal S}(I)}}}
\def\Symin{\hbox{\ensuremath{{\mathcal S}(\ini(I))}}}
\begin{document}

\title[Rees algebras of sparse determinantal ideals]{\bf Rees algebras of sparse determinantal ideals }

\author[Ela Celikbas]{Ela Celikbas}
\address{Ela Celikbas \\ Department of Mathematics\\
 West Virginia University\\
 Morgantown, W.V. 26506}
\email{ela.celikbas@math.wvu.edu}
\urladdr{https://math.wvu.edu/~ec0029}

\author[Emilie Dufresne]{Emilie Dufresne}
\address{Emilie Dufresne \\ Department of Mathematics\\
University of York\\
York, UK}
\email{emilie.dufresne@york.ac.uk}
\urladdr{https://www.york.ac.uk/maths/staff/emilie-dufresne/}

\author[Louiza Fouli]{Louiza Fouli}
\address{Louiza Fouli \\ Department of Mathematical Sciences \\
New Mexico State University\\
Las Cruces, NM 88003}
\email{lfouli@nmsu.edu}
\urladdr{http://www.web.nmsu.edu/~lfouli}

\author[Elisa Gorla]{Elisa Gorla}
\address{Elisa Gorla \\ Institut de Math\'ematiques\\
Universit\'e de Neuch\^atel\\
Rue Emile-Argand 11\\
CH-2000 Neuch\^atel\\
Switzerland}
\email{elisa.gorla@unine.ch}
\urladdr{http://members.unine.ch/elisa.gorla/}

\author[Kuei-Nuan Lin]{Kuei-Nuan Lin}
\address{Kuei-Nuan Lin\\
Department of Mathematics\\
Penn State University, 
Greater Allegheny campus\\
McKeesport, PA 15132}
\email{kul20@psu.edu}
\urladdr{https://sites.psu.edu/kul20}

\author[Claudia Polini]{Claudia Polini}
\address{Claudia Polini\\Department of Mathematics\\University of Notre Dame\\255 Hurley\\ Notre Dame, IN 46556 }
\email{cpolini@nd.edu}
\urladdr{https://www3.nd.edu/~cpolini}

\author[Irena Swanson]{Irena Swanson}
\address{Irena Swanson \\ Department of Mathematics\\
 Purdue University, 150 N. University Street, West Lafayette, IN 47907}
\email{irena@purdue.edu}
\urladdr{https://www.math.purdue.edu/~iswanso}

\keywords{Rees algebra, special fiber ring, determinantal ideal, sparse matrix, toric ring, Koszul algebra, ladder determinantal ring, SAGBI basis, Gr\"obner basis, Pl\"ucker relations}
\thanks{2020 {\em Mathematics Subject Classification}. Primary  13A30, 13C40; Secondary 14M12, 13P10, 05E40, 13F50}
\thanks{Claudia Polini  was partially supported by NSF grant DMS-1902033}

\maketitle

\begin{abstract}
We determine the defining equations of the Rees algebra and of the special fiber ring of the ideal of maximal minors of a $2\times n$ sparse matrix. We prove that their initial algebras are ladder determinantal rings. This allows us to show that the Rees algebra and the special fiber ring are Cohen-Macaulay domains, they are Koszul, they have rational singularities in characteristic zero and are F-rational in positive characteristic.
\end{abstract}

\section{Introduction}\label{introduction}

Given an ideal $I$ in a Noetherian ring $R$, one can associate an algebra to $I$ known as the Rees algebra $\Ri$ of $I$. This algebra $\Ri=\bigoplus_{i\ge 0}I^i t^i$ is a subalgebra of $R[t]$, where $t$ is an indeterminate. It was introduced by Rees in 1956 in order to prove what is now known as the Artin-Rees Lemma \cite{Rees}.
Geometrically, the Rees algebra  corresponds to the blowup of ${\rm Spec}(R)$ along $V(I)$. If $R$ is local with maximal ideal $\m$ or graded with homogeneous maximal ideal $\m$, the special fiber ring of $I$ is the algebra $\Fi=\Ri\otimes R/\m$. This algebra corresponds to the special fiber of the blowup of ${\rm Spec}(R)$ along $V(I)$. Besides its connections to resolution of singularities, the study of Rees algebras plays an important role in many other active areas of research including multiplicity theory, equisingularity theory, asymptotic properties of ideals, and integral dependence.

Although blowing up is a fundamental operation in the study of birational varieties,
an explicit understanding of this process remains an open problem. In particular, a key objective in this area is to express the Rees algebra and the special fiber ring as quotients of a polynomial ring, henceforth to determine their defining ideals. 

This question is wide open even for the simplest classes of ideals, including ideals generated by forms of the same degree in a polynomial ring. These are precisely the ideals generated by forms parametrizing a variety in projective space. The implicit equations of these varieties can be obtained from the defining ideal of the Rees ring. Indeed, the bihomogenous coordinate ring of the graph of the morphism defined by the forms is the Rees algebra of the ideal $I$. The homogeneous coordinate ring of the variety parametrized by the forms is the special fiber ring.

As the graph of a map carries more information than its image, even a partial understanding of the Rees ring such as the bigraded degrees of its defining equations, the Betti numbers, or the regularity of the defining ideal can be instrumental to the study of the variety. Determining the defining equations of the Rees algebra is a difficult problem in elimination theory, studied by commutative algebraists, algebraic geometers, and applied mathematicians in geometric modeling, see e.g. \cite{Buse, BJ,CWL, Cox, SCG}. Answers to these questions also have applications to the study of chemical reaction networks~\cite{CLS}.

The goal of this paper is to determine the defining equations of the Rees algebra and of the special fiber ring of the ideals generated by the maximal minors of sparse $2 \times n$ matrices. Sparse matrices are matrices whose entries are either zeroes or distinct variables. Their degeneracy loci were first studied by Giusti and Merle in the 80's. In~\cite{GM} they compute the codimension of their defining ideals and characterize when these ideals are prime or Cohen-Macaulay. Boocher in 2012 proved in~\cite{Boocher} that a minimal free resolution of the ideals of maximal minors of sparse matrices can be obtained from the Eagon-Northcott complex via a pruning method. In the same paper, he shows that the natural generators form a universal Gr\"obner basis.

In the case of a generic matrix, that is a matrix whose entries are distinct variables, the special fiber ring of the ideal of maximal minors is the coordinate ring of a Grassmannian variety. The fact that the Pl\"{u}cker relations define the Grassmannian variety is a classical theorem, see e.g.~\cite{BV}. The Rees algebra and the special fiber ring of ideals of maximal minors of generic matrices are Algebras with Straightening Laws (ASLs) in the sense of~\cite{DEP}, see~\cite{E, EHu}. Since the straightening relations come from the Pl\"{u}cker relations 
and the defining equations of the symmetric algebra, it follows that $\Ri$ is of fiber type, see \cite{DEP, E}, \cite[Lemma~2.2.1]{BST}, \cite[Proposition~4.2]{BV}.

In addition, as the posets defining $\Ri$ and $\Fi$ are wonderful in the sense of~\cite{EHu}, it follows that both algebras are Cohen-Macaulay, see \cite[Proposition~2.6]{EHu}. The normality of $\Fi$ follows immediately from the Cohen-Macaulay property since the Grassmannian variety is smooth.  Trung in \cite{Trung} proved that the powers and symbolic powers of $I$ coincide and therefore $\Ri$ is normal. From the deformation theorem developed in \cite{CHV}, see for instance \cite[Proposition~3.6]{BCV}, one can see that the Rees algebra of the ideal of maximal minors is defined by a Gr\"{o}bner basis of quadrics. The same statement holds for $\Fi$. 
Therefore, in the generic case both $\Ri$ and $\Fi$ are Koszul algebras and according to \cite{Bl} the ideal $I$ and all its powers have a linear resolution.

Our main result shows that the above properties of the blowup algebras of ideals of maximal minors of a generic matrix still hold in the case of sparse $2\times n$ matrices.

Now let $I$ be the ideal of maximal minors of a $2\times n$ sparse matrix. Inspired by the pioneering work of Conca, Herzog, and Valla~\cite{CHV}, we study the initial algebra of the Rees algebra of $I$. Our main technique is SAGBI bases~\cite{KM, RS}, an analogue for algebras of Gr\"obner bases for ideals. 
 
This approach was successfully used to study the Rees algebras of other families of ideals~\cite{ALL,CHV,LS2}.

First we prove that the initial algebra of the Rees algebra of $I$ is the Rees algebra of the initial ideal of $I$ with respect to a suitable order (see Theorem~\ref{initial rees thm}). Using deformation theory, we transfer properties from the Rees algebra of the initial ideal to the Rees algebra of the ideal itself. One advantage of this approach is that it allows us to reduce to the study of the Rees algebra of the initial ideal, which is not just a monomial algebra, but also the Rees algebra of the edge ideal of a graph and a ladder determinantal ring. These objects have been studied extensively and one can draw a plethora of information that allows us to describe these algebras in full detail, see among others~\cite{Conca, CN, SVV, V, Wang, Wang1}.

A key step in our proof that $\Rini$ is the initial algebra of $\Ri$ is Lemma~\ref{I^2=}, where we prove that taking the initial ideal commutes with powers. The main idea behind the proof is a comparison of the Hilbert functions of $I^2$ and $(\ini(I))^2$, an approach which was first used in~\cite{GMN}. We then use a lifting technique to obtain the defining equations of the Rees algebra and of the special fiber ring. Interestingly, they turn out to be the specialization of the defining equations of the Rees algebra and of the special fiber ring in the generic case.

The general question of understanding the Rees algebra and the special fiber ring of the ideal $I$ of maximal minors of a sparse $m\times n$ matrix is still open. In Remark~\ref{ASL} we propose a different approach, which applies to sparse matrices whose zero region has a special shape. These sparse matrices are exactly those that have the property that a maximal minor is non-zero if and only if the product of the elements on its diagonal is non-zero. This yields a nice combinatorial description of the initial ideal of $I$. Our arguments allow us to compute the equations of the special fiber ring and the Rees algebra and to establish algebraic properties such as normality, Cohen-Macaulayness, and Koszulness. We conjecture that these properties hold in general.

Our main results are summarized in the following.
\begin{theorem}\label{mainresults}
Let $X$ be a sparse $2 \times n$ matrix and $I = I_2(X)$.
\begin{enumerate}[$($a$)$]
\item The defining ideal of $\Fi$ is generated by the Pl\"ucker relations on the $2\times 2$-minors of $X$ and these form a Gr\"obner basis of the ideal they generate.
\item $\Ri$ is of fiber type, that is, its defining ideal is generated by the relations of the symmetric algebra of $I$ and by the Pl\"ucker relations on the $2\times 2$-minors of $X$. Moreover, these equations form a Gr\"obner basis of the ideal they generate.
\item $\Ri$ and $\Fi$ have rational singularities in characteristic zero and they are $F$-rational in positive characteristic. In particular, they are Cohen-Macaulay normal domains.
\item $\Ri$ and $\Fi$ are Koszul algebras. In particular, the powers of $I$ have a linear resolution.
\item The natural algebra generators of $\Ri$ and $\Fi$ are SAGBI bases of the algebras they generate.
\end{enumerate}
\end{theorem}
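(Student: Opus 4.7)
The plan is to reduce all five statements to properties of $\Rini$, the Rees algebra of the initial ideal, which by Theorem~\ref{initial rees thm} coincides with the initial algebra $\inw(\Ri)$ of $\Ri$ with respect to a suitably chosen weight. With this identification in hand, statement~(e) is essentially immediate: by definition a set of generators of $\Ri$ is a SAGBI basis exactly when its initial forms generate $\inw(\Ri)$ as an algebra, and this is precisely what Theorem~\ref{initial rees thm} asserts for the natural generators. The analogous statement for $\Fi$ then follows by passing to the fiber.

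For statements~(c) and~(d), I would exploit the observation, stressed in the introduction, that $\Rini$ is simultaneously a toric ring, the Rees algebra of the edge ideal of a bipartite graph, and a ladder determinantal ring of $2$-minors. The last description carries with it a substantial body of known results: such ladder determinantal rings are Cohen-Macaulay normal domains, are F-rational in positive characteristic and have rational singularities in characteristic zero, and are defined by a Gr\"obner basis of quadrics, hence are Koszul. Each of these properties transfers from $\inw(\Ri)$ to $\Ri$ by the Conca-Herzog-Valla deformation principle, and an entirely parallel argument handles $\Fi$. Finally, the statement on linear resolutions of the powers $I^k$ follows from Koszulness of $\Ri$ by the theorem of Blum cited in the introduction.

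Statements~(a) and~(b) I would handle by lifting. The natural candidate generators---the Pl\"ucker relations on the $2\times 2$-minors of $X$ for~(a), and these together with the defining equations of the symmetric algebra of $I$ for~(b)---clearly lie in the defining ideals of $\Fi$ and $\Ri$ respectively, as they are specializations of generators valid in the generic case. Applying the initial form map produces elements of the defining ideal of the ladder determinantal presentation of $\Rini$, and the task is to show that these initial forms already generate that ideal and form a Gr\"obner basis matching the standard Gr\"obner basis of $2$-minors of the relevant ladder. Once this combinatorial identification is in place, the standard lifting principle for initial algebras (in the spirit of~\cite{CHV}) promotes the Gr\"obner basis of the defining ideal of $\Rini$ to a Gr\"obner basis of the defining ideal of $\Ri$, yielding both the generation and the Gr\"obner basis claims.

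The main obstacle is precisely the combinatorial identification described in the previous paragraph. One must choose the weight $\omega$ so that the leading term of every non-zero $2\times 2$-minor of $X$ is a prescribed monomial respecting the sparsity pattern, and then verify that the initial forms of the Pl\"ucker relations and the symmetric algebra relations give exactly the $2$-minor generators of the correct ladder, with no missing generators and no spurious ones arising from the zero entries of $X$. This bookkeeping step is where the specific structure of sparse $2\times n$ matrices is used most heavily; it rests on the content of Lemma~\ref{I^2=}, which guarantees that taking initial ideals commutes with squaring and underlies Theorem~\ref{initial rees thm}. Once this step is completed, all of (a)--(e) follow from the reduction to $\Rini$ combined with the established structural theory of ladder determinantal rings.
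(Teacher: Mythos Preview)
Your proposal is correct and follows essentially the same route as the paper: the paper proves (e) for $\Ri$ as Theorem~\ref{initial rees thm}, deduces (c) and (d) by transferring ladder-determinantal properties of $\Rini$ via the Conca--Herzog--Valla deformation theorem (Corollaries~\ref{rees sing} and~\ref{Rees koszul}), and establishes (a) and (b) in Theorem~\ref{gens of rees} by exactly the lifting argument you describe---choosing explicit weights $\omega$ on $R[t]$ and $\pi$ on $S$ so that, via Sturmfels' theorem, $\inid_\pi(\mathcal{J})=I_2(L'_\lambda)$, and then checking term by term that each $2\times 2$ minor of $L'_\lambda$ is the $\pi$-leading form of some $\ell_{uijk}$ or $p_{ijkl}$. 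One small point: the paper actually derives the $\Fi$ statements (SAGBI basis, Koszul, rational/$F$-rational singularities) in Corollaries~\ref{ini F} and~\ref{FI CM normal} only \emph{after} Theorem~\ref{gens of rees}, using the CHV lifting criterion applied to the now-known Pl\"ucker presentation, rather than by ``passing to the fiber'' from the Rees algebra; your shortcut does work (since $\ini(I^k)=(\ini I)^k$ in each degree restricts to the relevant graded pieces), but the paper's ordering makes the dependence on the explicit equations more transparent.
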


\section{Notation}\label{sectsetting}

This section is devoted to the setup and notation we will use throughout the paper. 
Let $K$ be a field, $n\ge 3$ a positive integer,
and $X=(x_{ij})$ a $2 \times n$ sparse matrix of indeterminates over $K$, i.e., a matrix whose entries $x_{ij}$ are either distinct indeterminates or zero. We use the notation $x_{i,j}$ when it is not clear what the two subscripts are. Let $R=K[X]$ denote the polynomial ring over $K$ in the variables appearing in $X$ and let $I$ be the ideal generated by the $2 \times 2$ minors of $X$.

Up to permuting the rows and the columns of $X$, we may assume that there exist $r$ and $s$ with $0\le n-r-s\le r < n$ such that 
$$
X= \begin{pmatrix}
x_{1,1} & \cdots & x_{1,r} & x_{1,r+1} & \cdots & x_{1,r+s} &  0 & \cdots & 0 \\
0 & \cdots & 0 & x_{2,r+1} & \cdots & x_{2,r+s} & x_{2,r+s+1} & \cdots & x_{2,n}
\\
\end{pmatrix}.
$$
If $r = 0$, then $n=s$ and $X$ is a generic matrix. Since the results obtained in this paper are known in this case, 
we may assume without loss of generality that $r\ge 1$. 
Let $f_{ij}$ denote the $2 \times 2$ minor of $X$ that involves columns $i$ and $j$. 
Then $$I=(f_{ij}\mid 1\le i<j\le n)\subseteq R.$$
Notice that $f_{ij} = - f_{ji}$ and that $f_{ij}$ is a monomial or zero, unless $r+1\le i, j\le r+s$.

By the form of $X$, if a minor is non-zero, then the product of the entries on its diagonal is also non-zero. This means that we can choose a diagonal term order on $K[X]$, that is, a term order with the property that the leading term of each minor of $X$ is the product of the elements on its diagonal. An example of a diagonal term order is the lexicographic order with $x_{1,1}>\ldots>x_{1,r+s}>x_{2,r+1}>\ldots>x_{2,n}$. The maximal minors of $X$ form a diagonal Gr\"obner basis for $I$ by \cite[Proposition~5.4]{Boocher}.

Throughout the paper, we fix a diagonal term order $\tau$. The minimal generating set for $\ini(I)$ with respect to $\tau$ is described next. 

\begin{proposition}[\cite{Boocher}, Proposition~5.4]\label{gb of ini}
Let $X$ be a sparse $2\times n$ matrix, $I$ the ideal of $2\times 2$ minors of $X$, and $\tau$ a diagonal term order.
The initial ideal of $I$ is $$\ini(I)=(x_{1i}x_{2j}\mid 1\le i\le r+s, \max\{r,i\}<j\le n).$$
\end{proposition}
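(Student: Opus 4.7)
The plan is to leverage the Gr\"obner basis fact cited immediately before the proposition, namely Boocher's result that the $2\times 2$ minors of $X$ form a diagonal Gr\"obner basis of $I$. This reduces the problem to identifying the leading term of each minor $f_{ij}$ and discarding those $f_{ij}$ that vanish, so the proof becomes a direct case analysis on the positions of $i$ and $j$ relative to $r$ and $r+s$.

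First I would write $f_{ij} = x_{1,i}x_{2,j} - x_{1,j}x_{2,i}$ for $1\le i<j\le n$. Because $\tau$ is a diagonal term order, whenever $f_{ij}\ne 0$ its leading term is $x_{1,i}x_{2,j}$, provided this monomial itself is nonzero. From the explicit shape of $X$, the entry $x_{1,i}$ is nonzero precisely when $1\le i\le r+s$, and $x_{2,j}$ is nonzero precisely when $r+1\le j\le n$. Imposing also $i<j$ yields the combined condition $1\le i\le r+s$ and $\max\{r,i\}<j\le n$, which matches the claimed index set exactly.

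Next I would verify that whenever $(i,j)$ lies outside this set, $f_{ij}$ is the zero polynomial, so it contributes nothing to $\ini(I)$. There are only two such regimes: if $j\le r$, then both $x_{2,i}$ and $x_{2,j}$ vanish and $f_{ij}=0$; and if $i\ge r+s+1$, then both $x_{1,i}$ and $x_{1,j}$ vanish and $f_{ij}=0$. Combined with the previous step, this shows that the set $\{\,\ini(f_{ij}) : f_{ij}\ne 0\,\}$ is exactly $\{\,x_{1,i}x_{2,j} : 1\le i\le r+s,\ \max\{r,i\}<j\le n\,\}$, and by Boocher's Gr\"obner basis statement this set generates $\ini(I)$.

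There is no serious obstacle: the work is purely combinatorial bookkeeping once the cited Gr\"obner basis theorem is in hand. I would close by observing that the listed generators are distinct squarefree monomials, so this is in fact the unique minimal monomial generating set of $\ini(I)$, as stated.
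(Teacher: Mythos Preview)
Your proposal is correct and is exactly the argument the paper intends: the paper does not prove this proposition but merely cites Boocher's result that the maximal minors form a diagonal Gr\"obner basis, having already observed that a nonzero minor always has a nonzero diagonal product; your case analysis simply makes explicit which minors survive and what their leading terms are. One small wording fix: in your closing sentence, ``distinct squarefree monomials'' alone does not preclude divisibility---what ensures minimality is that all the listed generators are quadratic (hence of the same degree) and distinct.
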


It turns out that $\ini(I)$ corresponds to a {\it Ferrers diagram} as in Figure~\ref{fig:1}.

\begin{figure}[h]
	\centering
	\begin{tikzpicture}[ every node/.style={scale=0.9}]
	\node [label = above: {\small$x_{2,n}$}] at (5.3,5) {};
	\node [label = above: {\small $\cdots$}] at (6.5,5) {};
	\node [label = above: {\small$x_{2,r+s+1}$}] at (7.42,5) {};
	\node [label = above: {\small$x_{2,r+s}$}] at (8.5,5) {};
	\node [label = above: {\small$\cdots$}] at (9.5,5) {};
	\node [label = above: {\small$x_{2,r+1}$}] at (11,5) {};
	\node [label = left: { \small$x_{1,1}$}] at (5,4.8){};
	\node [label = left: {\small$\vdots$}] at (4.7,4){};
	\node [label = left: {\small$x_{1,r}$}] at (5,3.2){};
	\node [label = left: {\small$x_{1,r+1}$}] at (5,2.8){};
	\node [label = left: {\small$\vdots$}] at (4.7,1){};
	\node [label = left: {\small$x_{1,r+s}$}] at (5,-0.4){};
	\draw [line width=1pt, color=black] (5,5)--(11.5,5);
	\draw[line width=1pt] (5,5)--(5,-.5);
	\draw[line width=1pt](5,3)--(11.5,3);
	\draw[line width=1pt](8,5)--(8,-.5);
	\draw[line width=1pt](9,1)--(9,0.5);
	\draw[line width=1pt](8.5,0.5)--(8.5,0);
	\draw[line width=1pt](10,2)--(10,1.5);
	\draw[line width=1pt](9.5,1.5)--(9.5,1);
	\draw[line width=1pt](11,3)--(11,2.5);
	\draw[line width=1pt](10.5,2.5)--(10.5,2);
	\draw[line width=1pt](11.5,5)--(11.5,3);
	\draw[line width=1pt](10.5,2.5)--(11,2.5);
	\draw[line width=1pt](10,2)--(10.5,2);
	\draw[line width=1pt](9.5,1.5)--(10,1.5);
	\draw[line width=1pt](9,1)--(9.5,1);
	\draw[line width=1pt](8,0)--(8.5,0);
	\draw[line width=1pt](8.5,0.5)--(9,0.5);
	\draw[line width=1pt](5,-0.5)--(8,-.5);

	\node [label = left: { \Large $\bf{A}$}] at (7,4){};
	\node [label = left: { \Large $\bf{B}$}] at (9.5,4){};
	\node [label = left: { \Large $\bf{C}$}] at (7,2){};
	\node [label = left: { \Large $\bf{D}$}] at (9.5,2){};
	\end{tikzpicture}
	\caption{ Ferrers diagram }
	\label{fig:1}
\end{figure}
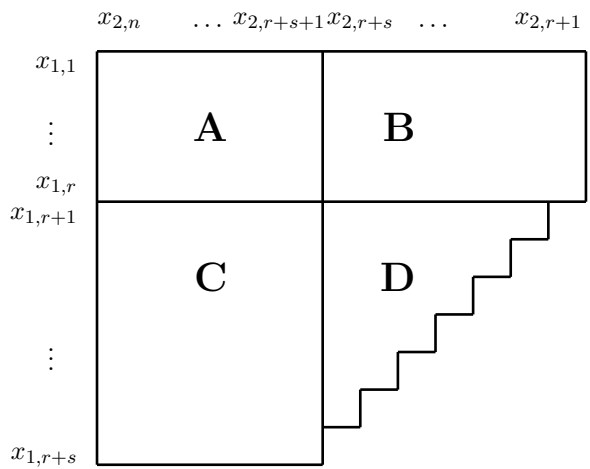

\begin{corollary} \label{cor init Ferrers}
The ideal $\ini(I)$ is the edge ideal of a Ferrers bipartite graph
whose vertex sets are
$$V=\{x_{1,1}, \ldots, x_{1,r+s}\}, \;\; W=\{x_{2,n}, \ldots ,x_{2,r+1}\},$$
and whose partition is $$\lambda=(\underbrace{n-r, \ldots, n-r}_{r \ {\rm{ times }}}, n-r-1, n-r-2, \ldots, n-r-s).$$ 
In other words, the first $r$ elements of $V$ have edges connecting them to all $n-r$ elements of $W$ Moreover, for $i>r$, the $i$th element of~$V$ is connected by an edge to the first $n-i$ elements of $W$.
\end{corollary}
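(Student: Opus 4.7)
The plan is to simply unwind Proposition~\ref{gb of ini} and translate its combinatorial content into the language of Ferrers bipartite graphs. By that proposition, the minimal generating set of $\ini(I)$ consists of squarefree quadratic monomials of the shape $x_{1,i}x_{2,j}$, with exactly one factor coming from the first row of $X$ and one from the second. Hence $\ini(I)$ is automatically the edge ideal of a bipartite graph $G$ on vertex sets $V=\{x_{1,1},\ldots,x_{1,r+s}\}$ and $W=\{x_{2,n},\ldots,x_{2,r+1}\}$, where $x_{1,i}$ and $x_{2,j}$ are joined by an edge if and only if $x_{1,i}x_{2,j}$ is a (minimal) generator of $\ini(I)$. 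The real content of the corollary is that, with respect to the indicated orderings on $V$ and $W$, the graph $G$ is a Ferrers graph with the stated partition $\lambda$.

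To verify this, I would label the vertices as $v_i=x_{1,i}$ for $1\le i\le r+s$ and $w_k=x_{2,n-k+1}$ for $1\le k\le n-r$, and then, for each row index $i$, read off the set of neighbors of $v_i$ directly from the description in Proposition~\ref{gb of ini}. Splitting into two cases according to whether $\max\{r,i\}=r$ or $\max\{r,i\}=i$:
\begin{itemize}
\item If $1\le i\le r$, the condition $\max\{r,i\}<j\le n$ becomes $r<j\le n$, so $v_i$ is adjacent to every element of $W$; this gives $n-r$ neighbors, namely $w_1,\ldots,w_{n-r}$.
\item If $r<i\le r+s$, the condition becomes $i<j\le n$. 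Translating $j=n-k+1$, this is $k\le n-i$, so $v_i$ is adjacent precisely to $w_1,\ldots,w_{n-i}$.
\end{itemize}

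In both cases the neighborhood of $v_i$ in $W$ is an \emph{initial segment} with respect to the specified ordering on $W$, which is exactly the defining property of a Ferrers bipartite graph. Reading off the sizes of these initial segments produces the sequence $(n-r,\ldots,n-r,n-r-1,n-r-2,\ldots,n-r-s)$, with $n-r$ repeated $r$ times, matching $\lambda$. Finally, since $n-r\ge n-r-1>n-r-2>\cdots>n-r-s$, this sequence is non-increasing and is therefore a genuine partition, completing the verification.

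There is no real obstacle here: the argument is a straightforward translation between the generator description of $\ini(I)$ given in Proposition~\ref{gb of ini} and the combinatorial definition of a Ferrers graph. The only minor bookkeeping point to get right is the reversal of the ordering on $W$ (so that $x_{2,n}$ is the first vertex and $x_{2,r+1}$ the last), which is precisely what ensures that the neighborhoods appear as initial segments rather than final ones.
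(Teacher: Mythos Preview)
Your proposal is correct and is exactly the natural argument: the paper treats this corollary as an immediate consequence of Proposition~\ref{gb of ini} and does not supply a separate proof, so you are simply filling in the straightforward translation from the generator description of $\ini(I)$ to the Ferrers graph language. There is nothing to compare; your bookkeeping (including the reversal of the ordering on $W$) is right.
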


We say that $\ini(I)$ is the {\it Ferrers ideal} $I_{\lambda}$.
See \cite{CN} for more on Ferrers graphs, diagrams, and ideals.

\begin{example} \label{running ex}
For the matrix
$$
X= \begin{pmatrix}
x_{11} & x_{12}& x_{13} & x_{14} & x_{15} & x_{16} &x_{17}&0&0\\
0 & 0 & 0 &x_{24} &x_{25}&x_{26}&x_{27} &x_{28} &x_{29}\\
\end{pmatrix}
$$
the ideal $\ini(I)$ is the Ferrers ideal $I_{\lambda}$
for the partition $\lambda=(6,6,6,5,4,3,2)$. Its Ferrers diagram is depicted in Figure~\ref{fig:2}.
\end{example}

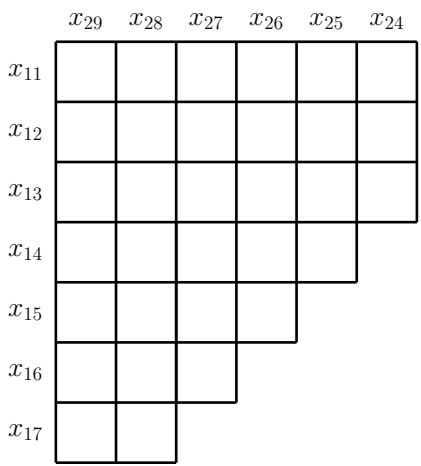
\begin{figure}[h]
	\centering
	\begin{tikzpicture}[scale=0.8, every node/.style={scale=0.6}]
	\node [label = above: {\Large $x_{29}$}] at (5.5,5) {};
	\node [label = above: {\Large $x_{28}$}] at (6.5,5) {};
	\node [label = above: {\Large $x_{27}$}] at (7.5,5) {};
	\node [label = above: {\Large $x_{26}$}] at (8.5,5) {};
	\node [label = above: {\Large $x_{25}$}] at (9.5,5) {};
	\node [label = above: {\Large $x_{24}$}] at (10.5,5) {};
	\node [label = left: {\Large $x_{11}$}] at (5,4.5){};
	\node [label = left: {\Large $x_{12}$}] at (5,3.5){};
	\node [label = left: {\Large $x_{13}$}] at (5,2.5){};
	\node [label = left: {\Large $x_{14}$}] at (5,1.5){};
	\node [label = left: {\Large $x_{15}$}] at (5,0.5){};
	\node [label = left: {\Large $x_{16}$}] at (5,-0.5){};
	\node [label = left: {\Large $x_{17}$}] at (5,-1.5){};
	\draw [line width=1pt, color=black] (5,5)--(11,5);
	\draw [line width=1pt] (5,5)--(5,-2);
	\draw [line width=1pt] (5,4)--(11,4);
	\draw [line width=1pt](6,5)--(6,-2);
	\draw [line width=1pt](5,3)--(11,3);
	\draw [line width=1pt](7,5)--(7,-2);
	\draw [line width=1pt](8,5)--(8,-1);
	\draw [line width=1pt](9,5)--(9,0);
	\draw [line width=1pt](10,5)--(10,1);
	\draw [line width=1pt](11,5)--(11,2);
	\draw [line width=1pt](7,2)--(7,-1);
	\draw [line width=1pt](5,2)--(11,2);
	\draw [line width=1pt](5,1)--(10,1);
	\draw [line width=1pt](5,0)--(9,0);
	\draw [line width=1pt](5,-1)--(8,-1);
	\draw [line width=1pt](5,-2)--(7,-2);
	\end{tikzpicture}
\caption{Ferrers diagram for $\lambda=(6,6,6,5,4,3,2)$} \label{fig:2}
\end{figure}

\begin{definition}[\cite{CN}]\label{L lambda}
The {\it one-sided ladder associated to} a Ferrers ideal $I_{\lambda}$
is the ladder $L_{\lambda}$ with the same shape as the Ferrers diagram for $\lambda$ and for which the entry in row $x_{1i}$ and column $x_{2j}$ is $y_{ij}$.

The {\it ladder associated to} $\ini(I)=I_{\lambda}$
is the two-sided ladder $L_{\lambda}'$
obtained from~$L_{\lambda}$ by adding a row of $n-r$ boxes at the top and a column of $r+s$ boxes on the left. The new boxes are filled with the variables from $W$ and $V$, respectively,
in their listed order.
\end{definition}

Illustrations of $L_{\lambda}$ and $L'_{\lambda}$
for $\lambda = (6,6,6,5,4,3,2)$ are in Figure~\ref{fig:3}.

\begin{figure}[h]
	\begin{tikzpicture}[scale=0.8, every node/.style={scale=0.6}]
	\node [label = above: {\Large $x_{29}$}] at (5.5,5) {};
	\node [label = above: {\Large $x_{28}$}] at (6.5,5) {};
	\node [label = above: {\Large $x_{27}$}] at (7.5,5) {};
	\node [label = above: {\Large $x_{26}$}] at (8.5,5) {};
	\node [label = above: {\Large $x_{25}$}] at (9.5,5) {};
	\node [label = above: {\Large $x_{24}$}] at (10.5,5) {};
	\node [label = left: {\Large $x_{11}$}] at (5,4.4){};
	\node [label = left: {\Large $x_{12}$}] at (5,3.4){};
	\node [label = left: {\Large $x_{13}$}] at (5,2.4){};
	\node [label = left: {\Large $x_{14}$}] at (5,1.4){};
	\node [label = left: {\Large $x_{15}$}] at (5,0.4){};
	\node [label = left: {\Large $x_{16}$}] at (5,-0.6){};
	\node [label = left: {\Large $x_{17}$}] at (5,-1.6){};
	\draw [line width=1pt, color=black] (5,5)--(11,5);
	\draw [line width=1pt] (5,5)--(5,-2);
	\draw [line width=1pt] (5,4)--(11,4);
	\draw [line width=1pt](6,5)--(6,-2);
	\draw [line width=1pt](5,3)--(11,3);
	\draw [line width=1pt](7,5)--(7,-2);
	\draw [line width=1pt](8,5)--(8,-1);
	\draw [line width=1pt](9,5)--(9,0);
	\draw [line width=1pt](10,5)--(10,1);
	\draw [line width=1pt](11,5)--(11,2);
	\draw [line width=1pt](7,2)--(7,-1);
	\draw [line width=1pt](5,2)--(11,2);
	\draw [line width=1pt](5,1)--(10,1);
	\draw [line width=1pt](5,0)--(9,0);
	\draw [line width=1pt](5,-1)--(8,-1);
	\draw [line width=1pt](5,-2)--(7,-2);
	\node [label = above: {\Large $y_{19}$}] at (5.5,4) {};
	\node [label = above: {\Large $y_{18}$}] at (6.5,4) {};
	\node [label = above: {\Large $y_{17}$}] at (7.5,4) {};
	\node [label = above: {\Large $y_{16}$}] at (8.5,4) {};
	\node [label = above: {\Large $y_{15}$}] at (9.5,4) {};
	\node [label = above: {\Large $y_{14}$}] at (10.5,4) {};
	\node [label = above: {\Large $y_{29}$}] at (5.5,3) {};	
	\node [label = above: {\Large $y_{28}$}] at (6.5,3) {};
	\node [label = above: {\Large $y_{27}$}] at (7.5,3) {};
	\node [label = above: {\Large $y_{26}$}] at (8.5,3) {};
	\node [label = above: {\Large $y_{25}$}] at (9.5,3) {};
	\node [label = above: {\Large $y_{24}$}] at (10.5,3) {};
	\node [label = above: {\Large $y_{39}$}] at (5.5,2) {};	
	\node [label = above: {\Large $y_{38}$}] at (6.5,2) {};
	\node [label = above: {\Large $y_{37}$}] at (7.5,2) {};
	\node [label = above: {\Large $y_{36}$}] at (8.5,2) {};
	\node [label = above: {\Large $y_{35}$}] at (9.5,2) {};
	\node [label = above: {\Large $y_{34}$}] at (10.5,2) {};
	\node [label = above: {\Large $y_{49}$}] at (5.5,1) {};	
	\node [label = above: {\Large $y_{48}$}] at (6.5,1) {};
	\node [label = above: {\Large $y_{47}$}] at (7.5,1) {};
	\node [label = above: {\Large $y_{46}$}] at (8.5,1) {};
	\node [label = above: {\Large $y_{45}$}] at (9.5,1) {};
	\node [label = above: {\Large $y_{59}$}] at (5.5,0) {};	
	\node [label = above: {\Large $y_{58}$}] at (6.5,0) {};
	\node [label = above: {\Large $y_{57}$}] at (7.5,0) {};
	\node [label = above: {\Large $y_{56}$}] at (8.5,0) {};
	\node [label = above: {\Large $y_{69}$}] at (5.5,-1) {};	
	\node [label = above: {\Large $y_{68}$}] at (6.5,-1) {};
	\node [label = above: {\Large $y_{67}$}] at (7.5,-1) {};
	\node [label = above: {\Large $y_{79}$}] at (5.5,-2) {};
	\node [label = above: {\Large $y_{78}$}] at (6.5,-2) {};
	\end{tikzpicture}
\hskip1cm
	\begin{tikzpicture}[scale=0.8, every node/.style={scale=0.6}]
	\node [label = above: {\Large $x_{29}$}] at (5.5,5) {};
	\node [label = above: {\Large $x_{28}$}] at (6.5,5) {};
	\node [label = above: {\Large $x_{27}$}] at (7.5,5) {};
	\node [label = above: {\Large $x_{26}$}] at (8.5,5) {};
	\node [label = above: {\Large $x_{25}$}] at (9.5,5) {};
	\node [label = above: {\Large $x_{24}$}] at (10.5,5) {};
	\node [label = left: {\Large $x_{11}$}] at (5,4.4){};
	\node [label = left: {\Large $x_{12}$}] at (5,3.4){};
	\node [label = left: {\Large $x_{13}$}] at (5,2.4){};
	\node [label = left: {\Large $x_{14}$}] at (5,1.4){};
	\node [label = left: {\Large $x_{15}$}] at (5,0.4){};
	\node [label = left: {\Large $x_{16}$}] at (5,-0.6){};
	\node [label = left: {\Large $x_{17}$}] at (5,-1.6){};
	\draw [line width=1pt, color=black] (4,5)--(11,5);
	\draw [line width=1pt] (4,5)--(4,-2);
	\draw [line width=1pt] (5,6)--(5,-2);
	\draw [line width=1pt] (4,4)--(11,4);
	\draw [line width=1pt] (5,6)--(11,6);
	\draw [line width=1pt](6,6)--(6,-2);
	\draw [line width=1pt](4,3)--(11,3);
	\draw [line width=1pt](7,6)--(7,-2);
	\draw [line width=1pt](8,6)--(8,-1);
	\draw [line width=1pt](9,6)--(9,0);
	\draw [line width=1pt](10,6)--(10,1);
	\draw [line width=1pt](11,6)--(11,2);
	\draw [line width=1pt](7,2)--(7,-1);
	\draw [line width=1pt](4,2)--(11,2);
	\draw [line width=1pt](4,1)--(10,1);
	\draw [line width=1pt](4,0)--(9,0);
	\draw [line width=1pt](4,-1)--(8,-1);
	\draw [line width=1pt](4,-2)--(7,-2);
	\node [label = above: {\Large $y_{19}$}] at (5.5,4) {};
	\node [label = above: {\Large $y_{18}$}] at (6.5,4) {};
	\node [label = above: {\Large $y_{17}$}] at (7.5,4) {};
	\node [label = above: {\Large $y_{16}$}] at (8.5,4) {};
	\node [label = above: {\Large $y_{15}$}] at (9.5,4) {};
	\node [label = above: {\Large $y_{14}$}] at (10.5,4) {};
	\node [label = above: {\Large $y_{29}$}] at (5.5,3) {};	
	\node [label = above: {\Large $y_{28}$}] at (6.5,3) {};
	\node [label = above: {\Large $y_{27}$}] at (7.5,3) {};
	\node [label = above: {\Large $y_{26}$}] at (8.5,3) {};
	\node [label = above: {\Large $y_{25}$}] at (9.5,3) {};
	\node [label = above: {\Large $y_{24}$}] at (10.5,3) {};
	\node [label = above: {\Large $y_{39}$}] at (5.5,2) {};	
	\node [label = above: {\Large $y_{38}$}] at (6.5,2) {};
	\node [label = above: {\Large $y_{37}$}] at (7.5,2) {};
	\node [label = above: {\Large $y_{36}$}] at (8.5,2) {};
	\node [label = above: {\Large $y_{35}$}] at (9.5,2) {};
	\node [label = above: {\Large $y_{34}$}] at (10.5,2) {};
	\node [label = above: {\Large $y_{49}$}] at (5.5,1) {};	
	\node [label = above: {\Large $y_{48}$}] at (6.5,1) {};
	\node [label = above: {\Large $y_{47}$}] at (7.5,1) {};
	\node [label = above: {\Large $y_{46}$}] at (8.5,1) {};
	\node [label = above: {\Large $y_{45}$}] at (9.5,1) {};
	\node [label = above: {\Large $y_{59}$}] at (5.5,0) {};	
	\node [label = above: {\Large $y_{58}$}] at (6.5,0) {};
	\node [label = above: {\Large $y_{57}$}] at (7.5,0) {};
	\node [label = above: {\Large $y_{56}$}] at (8.5,0) {};
	\node [label = above: {\Large $y_{69}$}] at (5.5,-1) {};	
	\node [label = above: {\Large $y_{68}$}] at (6.5,-1) {};
	\node [label = above: {\Large $y_{67}$}] at (7.5,-1) {};
	\node [label = above: {\Large $y_{79}$}] at (5.5,-2) {};
	\node [label = above: {\Large $y_{78}$}] at (6.5,-2) {};
	\end{tikzpicture}

\caption{Ladders $L_\lambda$ and $L_\lambda'$ for $\lambda=(6,6,6,5,4,3,2)$}
\label{fig:3}
\end{figure}

\FloatBarrier 

Notice that the $x_{ij}$ are only row and column markers in $L_\lambda$,
whereas in $L'_\lambda$ they are entries of the ladder.
The entries in $L_\lambda$ and in $L'_\lambda$
are distinct variables. Therefore, results for ladder determinantal ideals apply, see for instance \cite{Conca, Nar}.

\section{Rees algebras of $I$ and of its initial ideal}\label{sect Rees alg}

The Rees algebra of an ideal can be realized as a quotient of a polynomial ring. 
When $I$ is an ideal generated by $n$ elements, say $f_1, \ldots, f_n$, we let $y_1, \ldots, y_n$ be variables over $R$ and consider the map from $R[y_1, \ldots, y_n]$ to $\Ri$ that maps $y_i$ to $f_it$. Hence $\Ri\cong R[y_1, \ldots, y_n]/J$, where $J$ is the {\it defining ideal} of the algebra, and the {\it defining equations} are a system of generators of $J$.
The defining equations of the Rees algebra 
are in general difficult to compute or determine theoretically. The largest $y$-degree of a minimal generator of the defining ideal $J$ is the {\it relation type} of the ideal and plays an important role in the study of blowup algebras. Finally, we say that the Rees algebra of $I$ is of {\it fiber type} if the defining ideal of the Rees algebra is generated by the defining equations of the symmetric algebra and the defining equations of the special fiber ring.

Given a term order $\tau$ on a polynomial ring $R$ over a field $K$, one can extend it to a term order $\tau'$ on $R[t]$ as follows. Let $a,b\in R$ be monomials and let $i,j$ be non-negative integers. Define
\begin{equation}\label{tau'}
at^i<_{\tau'} bt^j \ \ \Leftrightarrow \ \ i<j \ \ \mbox{ or } \ \ i=j \mbox{ and } a<_{\tau}b.
\end{equation}
For a $K$-subalgebra $A$ of $R[t]$, one defines $\init(A)$ as the $K$-algebra generated by all initial monomials of elements in $A$. 
When $A$ is homogeneous in the variable $t$, then $\init(A)=\oplus_{i \ge 0} \init(A_i)$, where $A_i$ is the set of elements of $A$ that are homogeneous of degree $i$ in $t$. In particular, in our setting $$\init(\Ri)=\oplus_{i \ge 0} \init(\Ri_i)=\oplus_{i\ge 0} \ini(I^i)t^i.  $$ Since $\Rini=\oplus_{i \ge 1} (\ini(I))^it^i$, in order to prove that $\init(\Ri)=\Rini$, it suffices to show that $(\ini(I))^i=\ini(I^i)$ for all $i \ge 1$. A result by Conca, Herzog, and Valla states that it suffices to check the equality up to the relation type of $\ini(I)$.

\begin{theorem}\cite[Corollary~2.8]{CHV} \label{reltype thm}
Let $R$ be a polynomial ring, $I$ a homogeneous ideal in $R$, and $\tau$ a term order on $R$.  Suppose that $(\ini(I))^i=\ini(I^i)$ for $1\le i \le {\rm{reltype}}(\ini(I))$. Then
$$(\ini(I))^i=\ini(I^i) \mbox{ for all } i \ge 1 \ \ \mbox{ and } \ \ {\rm{reltype}}(I) \le {\rm{reltype}}(\ini(I)).$$
\end{theorem}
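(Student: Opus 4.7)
The strategy is to establish part (a) by induction on $i$, then derive part (b) from (a) via the same lifting argument. The inclusion $(\ini(I))^i \subseteq \ini(I^i)$ is immediate from the multiplicativity of initial terms, so only the reverse inclusion $\ini(I^i) \subseteq (\ini(I))^i$ requires work; the cases $1 \le i \le m := \mathrm{reltype}(\ini(I))$ are given by hypothesis.

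For the inductive step with $i > m$, I would denote the defining ideals of the two Rees algebras by $J = \ker(R[\mathbf{y}] \twoheadrightarrow \mathcal{R}(I))$ and $J' = \ker(R[\mathbf{y}] \twoheadrightarrow \mathcal{R}(\ini(I)))$, both $y$-graded. The relation-type hypothesis says $J' = (J'_1, \ldots, J'_m)R[\mathbf{y}]$. Given $g \in I^i$, I would write $g = \sum_{\alpha} c_{\alpha} f^{\alpha}$ with $|\alpha| = i$, $c_{\alpha} \in R$, and $f^{\alpha} = \prod_k f_k^{\alpha_k}$. Either the initial term of $g$ equals some $c_{\alpha_0}(\ini f)^{\alpha_0} \in (\ini(I))^i$ and we are done, or cancellation among the top terms yields an element of $J'_i$. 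By the relation-type bound this element is an $R[\mathbf{y}]$-combination of elements $q \in J'_j$ with $j \le m$. The hypothesis $\ini(I^j) = (\ini(I))^j$ for $j \le m$ provides a lift $\tilde q \in J_j$ for each such $q$, i.e.\ any relation among the $(\ini f)^{\alpha}$'s of $y$-degree $j \le m$ comes from a relation among the $f^{\alpha}$'s of the same $y$-degree. Substituting these lifts $\tilde q$ back into the expression for $g$ produces an equivalent presentation with strictly smaller $\tau$-initial term; the iteration terminates by the well-foundedness of $\tau$ on the finite set of monomials $\le \ini(g)$.

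For part (b), once part (a) holds in all degrees, the same lifting procedure shows that every homogeneous element of $J$ in $y$-degree $i > m$ can be reduced modulo the $R[\mathbf{y}]$-ideal $(J_1, \ldots, J_m)$ by iteratively lifting the relations that its image in $J'$ decomposes into. Hence $J = (J_1, \ldots, J_m) R[\mathbf{y}]$ and $\mathrm{reltype}(I) \le m$.

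The main obstacle is making the lifting lemma precise: turning $q \in J'_j$ into $\tilde q \in J_j$ in such a way that the subsequent substitution into $g$ strictly decreases the $\tau$-initial term. This requires extending $\tau$ to a term order on $R[\mathbf{y}]$, e.g.\ by weighting the $y$-variables to mirror the $\tau'$-weighting of the $f_k t$, so that the leading $y$-form of $\tilde q$ coincides with $q$ and the lower-order correction terms introduced by the lifting genuinely contribute to the descent rather than obstructing it.
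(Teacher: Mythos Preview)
The paper does not supply its own proof of this statement: Theorem~\ref{reltype thm} is quoted verbatim from \cite[Corollary~2.8]{CHV} and used as a black box. So there is no proof in the paper to compare against.

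That said, your sketch is in the right spirit and is close to the argument in \cite{CHV}, which is phrased in the language of SAGBI bases and subduction rather than ``lifting relations''. The cleanest way to organise what you wrote is this: the hypothesis $\ini(I^j)=(\ini(I))^j$ for $j\le m$ is \emph{exactly} the statement that subduction succeeds in $y$-degree $\le m$. Concretely, if $q=\sum c_\alpha y^\alpha\in J'_j$ is a relation among the $\ini(f_k)$, then $h:=\sum c_\alpha f^\alpha\in I^j$ has $\ini(h)\in\ini(I^j)=(\ini(I))^j$, so $\ini(h)=c_\beta(\ini f)^\beta$ for some $\beta$ with $|\beta|=j$; subtracting $c_\beta f^\beta$ strictly lowers the initial term and one iterates. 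This is the subduction step, and it replaces your somewhat imprecise claim that ``the hypothesis provides a lift $\tilde q\in J_j$''. Once subduction is known to terminate for a generating set of $J'$ (which lives in $y$-degree $\le m$ by the relation-type assumption), the Robbiano--Sweedler SAGBI criterion \cite{RS} (see also \cite[Proposition~1.1]{CHV}) gives that the $x_i$ and $f_k t$ form a SAGBI basis of $\Ri$, hence $\init(\Ri)=\Rini$, which unwinds to $(\ini(I))^i=\ini(I^i)$ for all $i$. The bound on $\mathrm{reltype}(I)$ then follows from the general fact that a SAGBI deformation can only lower the degrees of defining relations; this is \cite[Corollary~2.2]{CHV}.

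The one place your write-up is genuinely loose is the phrase ``provides a lift $\tilde q\in J_j$'': you do not need, and will not in general obtain, a single element of $J_j$ whose $\pi$-leading form equals $q$. What you obtain instead is that the image of $q$ under $y_k\mapsto f_k t$ subducts to zero, which is a weaker (but sufficient) statement. Reframing the argument as subduction rather than lifting removes the obstacle you flag in your last paragraph.
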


The equality $\init(\Ri)=\Rini$ was established by Conca in~\cite[Theorem~2.1]{ConcaGB} for the ideal $I$ of maximal minors of a generic $m\times n$ matrix. The following lemma is the key to establishing $\init(\Ri)=\Rini$ in our case.

\begin{lemma}\label{I^2=}
Let $X$ be a sparse $2\times n$ matrix, $I$ the ideal of $2\times 2$ minors of $X$, and $\tau$ a diagonal term order on $R=K[X]$. Then $\ini(I^2)=(\ini(I))^2$.
\end{lemma}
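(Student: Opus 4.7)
The inclusion $(\ini(I))^2\subseteq\ini(I^2)$ is automatic, and since taking initial ideals preserves Hilbert functions we have $H_{R/\ini(I^2)}(d)=H_{R/I^2}(d)$ for all $d$. Combined with the containment this gives $H_{R/(\ini(I))^2}(d)\ge H_{R/I^2}(d)$, and equality of Hilbert functions in every degree forces equality of the two monomial ideals. The proof therefore reduces to the numerical identity $H_{R/I^2}(d)=H_{R/(\ini(I))^2}(d)$ for every $d\ge 0$.

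To compute $H_{R/(\ini(I))^2}$ I would use Corollary~\ref{cor init Ferrers}: since $\ini(I)=I_\lambda$ is the edge ideal of a Ferrers bipartite graph, its Hilbert series is combinatorially explicit (see~\cite{CN}) and enumerates the monomials of $R$ that are not divisible by any length-four product of the form $x_{1i}x_{2j}\cdot x_{1k}x_{2l}$ corresponding to two edges of the Ferrers diagram in Figure~\ref{fig:1}. To compute $H_{R/I^2}$, the plan is to lift the generic case: let $\tilde X$ be the $2\times n$ generic matrix obtained from $X$ by replacing each zero entry by a fresh indeterminate $z$, let $\tilde R$ be the resulting polynomial ring, and let $\tilde I=I_2(\tilde X)$. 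Conca's theorem gives $\ini(\tilde I^2)=(\ini(\tilde I))^2$ in $\tilde R$, so $H_{\tilde R/\tilde I^2}=H_{\tilde R/(\ini(\tilde I))^2}$ is already known. One then specializes the $z$-variables to $0$ and tracks Hilbert functions on both sides: on the initial-ideal side the specialization carries $(\ini(\tilde I))^2$ onto $(\ini(I))^2$ in a transparent way, while on the ideal side it carries $\tilde I^2$ onto an ideal containing $I^2$.

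The main obstacle is showing that this last specialization preserves the Hilbert function and is not a proper surjection: in general, setting variables to zero inside a power of an ideal loses dimension. To overcome this, I would exploit the strong rigidity available in this setup. Both $I$ and $\ini(I)$ admit $2$-linear minimal free resolutions with identical graded Betti numbers, a consequence of Boocher's pruning of the Eagon--Northcott complex~\cite{Boocher} together with the corresponding theory for Ferrers ideals in~\cite{CN}. Combined with the short exact sequences $0\to I^2\to I\to I/I^2\to 0$ and $0\to(\ini(I))^2\to\ini(I)\to\ini(I)/(\ini(I))^2\to 0$, this reduces the Hilbert function comparison to matching the conormal modules $I/I^2$ and $\ini(I)/(\ini(I))^2$, whose presentations share Betti data inherited from the common first-syzygy structure. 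Tracing these identifications through the specialization from $\tilde R$ to $R$ should yield the required equality $H_{R/I^2}=H_{R/(\ini(I))^2}$, and hence the lemma.
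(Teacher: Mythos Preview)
Your opening reduction is correct and matches the paper: both argue that $(\ini(I))^2\subseteq\ini(I^2)$, so equality follows once $H_{R/I^2}=H_{R/(\ini(I))^2}$. The difficulty is entirely in that numerical identity, and here your proposed route has a genuine gap.

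The conormal-module reduction is circular. Since $H_I=H_{\ini(I)}$ holds automatically, the short exact sequences $0\to I^2\to I\to I/I^2\to 0$ and its initial analogue show that $H_{I/I^2}=H_{\ini(I)/(\ini(I))^2}$ is \emph{equivalent} to $H_{I^2}=H_{(\ini(I))^2}$; you have not reduced the problem, only restated it. Knowing that $I$ and $\ini(I)$ share graded Betti numbers (Boocher, Corso--Nagel) controls $H_I$ and $H_{\ini(I)}$, which already agree, but says nothing about $H_{I^2}$: the first syzygies of $I$ determine $\mathrm{Sym}_2(I)$, not $I^2$, and the torsion kernel $\mathrm{Sym}_2(I)\to I^2$ is precisely what is at stake. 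The specialization idea has the same problem in another guise. Each variable $z$ you kill appears in a monomial generator of $(\ini(\tilde I))^2$, hence is a zerodivisor on $\tilde R/(\ini(\tilde I))^2$; the $z$'s are therefore not a regular sequence on either quotient, so the passage from $H_{\tilde R/\tilde I^2}$ to $H_{R/I^2}$ (and likewise on the initial side) is not controlled. Boocher's pruning theorem handles exactly this issue for $\tilde I$ itself, but there is no analogue for $\tilde I^2$, and proving one is essentially the lemma you are trying to establish.

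The paper avoids all of this with a direct induction on $n$. Writing $I=x_{11}I_1+I_2$, where $I_1=(x_{2,r+1},\ldots,x_{2,n})$ and $I_2$ is the minor ideal of the matrix with the first column deleted, one has $\ini(I)=x_{11}I_1+\ini(I_2)$. Expanding the squares and using short exact sequences of the form
\[
0\longrightarrow x_{11}J_2^2\longrightarrow (x_{11}^2I_1^2+x_{11}I_1J_2)\oplus J_2^2\longrightarrow J^2\longrightarrow 0
\]
(with $J_2=I_2$ or $J_2=\ini(I_2)$) reduces the Hilbert-function comparison to the induction hypothesis together with the identity $\ini(I_1I_2)=I_1\,\ini(I_2)$. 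That last identity is proved by a one-line degree argument using the auxiliary grading $\deg x_{1i}=0$, $\deg x_{2j}=1$. This column-deletion induction is the missing idea; your specialization-from-generic strategy does not supply an alternative to it.
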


\begin{proof}
We proceed by induction on $n\ge 2$. For $n=2$, the ideal $I$ is principal and the result holds automatically. To proceed we choose a decomposition of $I$ as follows.

Let $I_1$ be the ideal generated by the variables $x_{2j}$ for $r+1 \le j \le n$ and let $I_2$ be the ideal generated by the $2 \times 2$ minors of the submatrix of $X$ obtained by deleting the first column of $X$. Since $r\ge 1$, then $I = x_{11} I_1 + I_2$. By Proposition~\ref{gb of ini}, $\ini(I) = x_{11} I_1 + \ini(I_2)$. Notice here that for $I_2$ we may have that its corresponding $r$ is $0$. In that case $\ini(I_2^2)=(\ini(I_2))^2$ by \cite[Theorem~2.1]{ConcaGB}. Otherwise, the equality holds by induction.

Certainly $(\ini(I))^2 \subseteq \ini(I^2)$. To prove equality it is enough to show that
the Hilbert functions of $(\ini(I))^2$ and $I^2$ are the same, since the Hilbert function of $I^2$ is the same as the Hilbert function of $\ini(I^2)$.

By induction on $n$, the Hilbert functions of $I_2^2$ and $(\ini(I_2))^2$ are the same. We have $$I^2=(x_{11}^2I_1^2+x_{11}I_1I_2)+I_2^2 \quad \mbox{and }\quad (\ini(I))^2=(x_{11}^2I_1^2+x_{11}I_{1}\ini(I_2))+(\ini(I_2))^2.$$
Both of these ideals are of the form
$J^2 = (x_{11}^2 I_1^2 + x_{11}I_1J_2) + J_2^2$,
with $J_2 = I_2$ and $J = I$ in the former case
and with $J_2 = \ini(I_2)$ and $J = \ini(I)$ in the latter case.
Each case gives rise to the short exact sequence:
\begin{eqnarray} \label{ses}
0 \longrightarrow (x_{11}^2I_1^2+x_{11}I_1J_2)\cap J_2^2\longrightarrow
(x_{11}^2I_1^2+x_{11}I_1J_2)\oplus J_2^2\longrightarrow J^2 \longrightarrow
0.
\end{eqnarray}
Since the variable $x_{11}$ is a non-zerodivisor on $J_2^2$,
and since $J_2 \subseteq I_1$,
the intersection on the left-hand side of the sequence is:
\begin{align*}
	(x_{11}^2 I_1^2 + x_{11} I_1 J_2) \cap J_2^2
	&= (x_{11}^2 I_1^2 + x_{11} I_1 J_2) \cap (x_{11}) \cap J_2^2 \\
	&= (x_{11}^2 I_1^2 + x_{11} I_1 J_2) \cap x_{11} J_2^2 \\
    &= x_{11} J_2^2.
\end{align*}
This means that the short exact sequence simplifies to
\begin{eqnarray}
0 \longrightarrow x_{11}J_2^2\longrightarrow
(x_{11}^2I_1^2+x_{11}I_1J_2)\oplus J_2^2\longrightarrow J^2 \longrightarrow
0.
\end{eqnarray}
By the induction hypothesis,
the two incarnations of $J_2^2$ and hence of $x_{11}J_2^2$
have the same Hilbert function,
so that to prove that the two incarnations $I^2$ and $(\ini(I))^2$ of $J^2$
have the same Hilbert function
it suffices to prove that the two incarnations
of $x_{11}^2I_1^2+x_{11}I_1J_2$ have the same Hilbert function.
As before we get a short exact sequence:
\begin{eqnarray} \label{ses3}
0 \longrightarrow x_{11}I_1^2\cap I_1J_2 \longrightarrow x_{11}I_1^2\oplus I_1J_2\longrightarrow x_{11}I_1^2+I_1J_2 \longrightarrow 0,
\end{eqnarray}
and since $x_{11}$ is a non-zerodivisor on $I_1J_2$
and $J_2 \subseteq I_1$,
the short exact sequence simplifies to
\begin{eqnarray} \label{ses2}
0 \longrightarrow x_{11} I_1 J_2 \longrightarrow x_{11}I_1^2\oplus I_1J_2\longrightarrow x_{11}I_1^2+I_1J_2 \longrightarrow 0.
\end{eqnarray}

The conclusion
will follow once we show that in the two incarnations of $J_2$ the Hilbert function of $I_1 J_2$ is the same, that is $\ini(I_1I_2)=I_1\ini(I_2)$.
Clearly, $I_1\ini(I_2) \subseteq \ini(I_1I_2)$.
Define a grading on $R$ by setting $\deg(x_{1i})= 0$ and $\deg(x_{2j}) = 1$ for all $i, j$.
With this grading, $I_1$ is generated by elements of degree $1$,
$I_2$ is generated by elements of degree $1$,
and $I_1I_2$ is generated by elements of degree $2$.
It suffices to show that for every homogeneous $f \in I_1I_2$,
$\ini(f) \in I_1 \ini(I_2)$.
Clearly $\ini(f)$ has degree at least $2$ and is in $\ini(I_2)$.
By Proposition~\ref{gb of ini},
$\ini(I_2)$ is generated by elements of degree $1$.
Thus by degree reasons,
to write $\ini(f)$ as an element of $\ini(I_2)$,
the coefficient must have degree at least $1$,
i.e.,
the coefficient must be in
$(x_{2i} \mid i = r+1, \ldots, n) = I_1$.
It follows that $\ini(f) \in I_1 \ini(I_2)$.
\end{proof}

We are now ready to prove the main theorem of this section. Let $A$ be a subalgebra of a polynomial ring over a field $K$.
We recall that a subset $C$ of $A$ is a \emph{SAGBI basis} for $A$ with respect to a term order $\tau$ if $\ini(A)$ is generated as a $K$-algebra by the initial monomials of the elements in $C$ with respect to $\tau$. In general, SAGBI bases are difficult to compute and may not even be finite. When $I$ is the ideal of maximal minors of a generic $m \times n$ matrix $X$, Conca showed that $\init(\Ri)=\Rini$ with respect to a diagonal term order $\tau$~\cite[Theorem~2.1]{ConcaGB}.
Moreover, Narasimhan \cite[Corollary~3.4]{Nar} showed that the maximal minors of $X$ form a  Gr\"{o}bner basis for $I$. It then follows that the natural algebra generators of $\Ri$ are a SAGBI basis of it. The following theorem extends these results to the case of the ideal maximal minors of a sparse $2\times n$ matrix.

\begin{theorem}\label{initial rees thm}
Let $X$ be a sparse $2\times n$ matrix and let $I$ be the ideal of $2\times 2$ minors of $X$. Let $\tau$ be a diagonal term order on $R=K[X]$ and let $\tau'$ be its extension to $R[t]$ as in~(\ref{tau'}). Then $$\init(\Ri)=\Rini \ \ \mbox{ and } \ \ {\rm{reltype}}(I)\le 2.$$ 
Moreover, the set $\{x_{1i}, x_{2j} \mid 1\le i \le r+s, r+1 \le j \le n\} \cup\{f_{ij}t\mid 1\le i<j \le n\}$ is a SAGBI basis of $\Ri$ with respect to $\tau'$. 
\end{theorem}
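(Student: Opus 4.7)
The plan is to deduce $\init(\Ri)=\Rini$ from Theorem~\ref{reltype thm}, which requires verifying $\ini(I^i)=(\ini(I))^i$ only for $1\le i\le\mathrm{reltype}(\ini(I))$. The case $i=1$ is immediate, and Lemma~\ref{I^2=} handles $i=2$. Hence everything reduces to showing that $\mathrm{reltype}(\ini(I))\le 2$; once this is established, Theorem~\ref{reltype thm} yields $\ini(I^i)=(\ini(I))^i$ for all $i\ge 1$ as well as $\mathrm{reltype}(I)\le 2$, and the equality
$$\init(\Ri)=\bigoplus_{i\ge 0}\ini(I^i)t^i=\bigoplus_{i\ge 0}(\ini(I))^i t^i=\Rini$$
follows from the decomposition of initial algebras discussed before Theorem~\ref{reltype thm}.

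For the bound $\mathrm{reltype}(\ini(I))\le 2$, I would invoke Corollary~\ref{cor init Ferrers}: $\ini(I)$ is the edge ideal $I_\lambda$ of a Ferrers bipartite graph, which is chordal bipartite. For edge ideals of such graphs, both the Rees algebra and the special fiber ring are known to be defined by relations of degree at most $2$ in the presentation variables. More precisely, the Rees algebra is of fiber type (see, e.g.,~\cite{V, SVV}), and the toric edge ring of a chordal bipartite graph is defined by the quadratic binomials arising from its $4$-cycles. Alternatively, one can appeal directly to the results of Corso and Nagel~\cite{CN} on Rees algebras of Ferrers ideals, which supply quadratic defining equations explicitly. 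Either path gives $\mathrm{reltype}(\ini(I))\le 2$.

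The SAGBI basis claim is then essentially formal. By the diagonal property of $\tau$ and Proposition~\ref{gb of ini}, the $\tau'$-initial monomials of the proposed generators are exactly the variables $x_{1i}$, $x_{2j}$ together with the degree-two monomials $\ini(f_{ij})t=x_{1i}x_{2j}t$ for $1\le i\le r+s$ and $\max\{r,i\}<j\le n$. These are precisely the natural $K$-algebra generators of $R[\ini(I)t]=\Rini$, so the equality $\init(\Ri)=\Rini$ just obtained implies, by the very definition of a SAGBI basis, that the proposed set is a SAGBI basis of $\Ri$ with respect to $\tau'$. The main obstacle in the whole argument is the reltype bound on $\ini(I)$: the clean reduction via Theorem~\ref{reltype thm} and Lemma~\ref{I^2=} leans on a nontrivial structural fact about edge ideals of Ferrers (or chordal bipartite) graphs, and the precise citation has to be marshalled carefully to cover exactly the ideal we need.
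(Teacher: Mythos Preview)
Your proposal is correct and follows essentially the same route as the paper: the paper bounds $\mathrm{reltype}(\ini(I))$ by combining \cite[Proposition~5.1]{CN} (the special fiber of the Ferrers ideal is defined by the $2\times 2$ minors of $L_\lambda$, hence by quadrics) with \cite[Theorem~3.1]{V} (edge ideals of bipartite graphs have Rees algebras of fiber type), then applies Lemma~\ref{I^2=} and Theorem~\ref{reltype thm} exactly as you do. Your SAGBI argument and the paper's are also the same, the paper phrasing it via \cite[Proposition~5.4]{Boocher}, which is precisely the content of Proposition~\ref{gb of ini}.
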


\begin{proof}
According to \cite[Proposition~5.1]{CN}, the defining ideal of the special fiber ring of~$I_{\lambda} = \ini(I)$ is the ideal generated by the $2 \times 2$ minors of $L_{\lambda}$. By~\cite[Theorem~3.1]{V} we know that $\Rini$ is of fiber type and hence the relation type of $\ini(I)$ is at most $2$. 

By Lemma~\ref{I^2=} we have that $\ini(I^2)=(\ini(I))^2$, and thus by Theorem~\ref{reltype thm}, the relation type of $I$ is at most~$2$ and $\ini(I^i)=(\ini(I))^{i}$ for all $i$.
Therefore,
$$\init(\Ri)=\oplus_{i\ge 0} \ini(I^i)=\oplus_{i\ge 0} (\ini(I))^i=\Rini.$$

The last part of the claim now follows, since the $2\times 2$ minors of $X$ are a Gr\"{o}bner basis of $I$ by \cite[Proposition~5.4]{Boocher}. Therefore the set $\{x_{1i}, x_{2j} \mid 1\le i \le r+s, r+1 \le j \le n\} \cup\{f_{ij}t\mid 1\le i<j \le n\}$ is a SAGBI basis of $\Ri$, since the leading terms of these elements generate $\Rini$ as $K$-algebra and $\init(\Ri)=\Rini$. 
\end{proof}

An immediate consequence of the equality $\ini(I^i)=(\ini(I))^i$ is the following corollary.

\begin{corollary}
Let $X$ be a sparse $2\times n$ matrix and let $I$ the ideal of $2\times 2$ minors of $X$. Let $\tau$ be a diagonal term order on $R=K[X]$. The $i$-fold products of maximal minors of $X$ are a $\tau$-Gr\"{o}bner basis of $I^i$  for every $i$. 
\end{corollary}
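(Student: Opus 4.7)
The plan is to bootstrap directly from the main result of the section, Theorem~\ref{initial rees thm}, which provides the crucial equality $\ini(I^i)=(\ini(I))^i$ for every $i\ge 1$. Once this is in hand, the corollary reduces to a bookkeeping statement about how initial terms behave under multiplication.

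First, I would record two facts that are essentially already available. By Proposition~\ref{gb of ini} (equivalently \cite[Proposition~5.4]{Boocher}), the non-zero $2\times 2$ minors $f_{jk}$ of $X$ form a Gr\"obner basis of $I$ with respect to the diagonal term order $\tau$; in particular, the initial monomials $\ini(f_{jk})$ are precisely the products of the diagonal entries $x_{1j}x_{2k}$ of the corresponding submatrix and they generate $\ini(I)$ as an ideal. Also, the ideal power $I^i$ is generated by the $i$-fold products
\[
f_{j_1 k_1} f_{j_2 k_2}\cdots f_{j_i k_i}, \qquad 1\le j_\ell<k_\ell\le n,\ \ell=1,\ldots,i.
\]

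The key computation is then that for any polynomials $g_1,\ldots,g_i$, one has $\ini(g_1\cdots g_i)=\ini(g_1)\cdots\ini(g_i)$, since $\tau$ is a term order. Applying this to the $i$-fold products of minors gives
\[
\ini\bigl(f_{j_1 k_1}\cdots f_{j_i k_i}\bigr)=\ini(f_{j_1 k_1})\cdots\ini(f_{j_i k_i}),
\]
and as the tuples $(j_\ell,k_\ell)$ vary, these initial monomials range over all $i$-fold products of generators of $\ini(I)$. Therefore the ideal generated by the initial terms of the $i$-fold products of the $f_{jk}$ is exactly $(\ini(I))^i$.

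Now I invoke Theorem~\ref{initial rees thm}, which tells us $(\ini(I))^i=\ini(I^i)$. Combining this with the previous step shows that the initial terms of the $i$-fold products of minors generate $\ini(I^i)$, which is by definition the statement that these $i$-fold products form a $\tau$-Gr\"obner basis of $I^i$. There is no real obstacle here once Theorem~\ref{initial rees thm} is established; the only point worth double-checking is that one is indeed collecting \emph{all} generators of $(\ini(I))^i$ when allowing repetitions among the indices $(j_\ell,k_\ell)$, which is immediate from Proposition~\ref{gb of ini}.
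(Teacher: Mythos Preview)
Your proof is correct and follows exactly the approach the paper intends: the paper states this corollary as ``an immediate consequence of the equality $\ini(I^i)=(\ini(I))^i$'' established in Theorem~\ref{initial rees thm}, and you have simply written out that immediate consequence in detail.
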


Using the theory of SAGBI bases, one can now deduce properties of the Rees algebra of $I$ from those of its  initial algebra. 

\begin{corollary}\label{rees sing}
Let $X$ be a sparse $2\times n$ matrix and let $I$ the ideal of $2\times 2$ minors of $X$. Then $\Ri$ has rational singularities if the field $K$ has characteristic $0$ and it is $F$-rational if $K$ has positive characteristic. In particular, $\Ri$ is a Cohen-Macaulay normal domain.
\end{corollary}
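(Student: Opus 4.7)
The plan is to deduce the corollary by combining the identification $\init(\Ri)=\Rini$ from Theorem~\ref{initial rees thm} with SAGBI deformation and known properties of ladder determinantal rings. Since $\Ri$ admits a finite SAGBI basis with respect to $\tau'$ (Theorem~\ref{initial rees thm}), there is a flat one-parameter family whose generic fiber is $\Ri$ and whose special fiber is $\Rini$. Consequently, any property preserved under such flat degenerations passes from $\Rini$ to $\Ri$, and the strategy reduces to establishing the desired singularity properties for $\Rini$ and then invoking the appropriate deformation theorems.

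To handle $\Rini$, I would identify it with a ladder determinantal ring. By Corollary~\ref{cor init Ferrers}, $\ini(I)=I_\lambda$ is the edge ideal of a Ferrers bipartite graph. The Rees algebra of such a Ferrers edge ideal is the toric ring whose defining equations are the $2\times 2$ minors of the two-sided ladder $L'_\lambda$ of Definition~\ref{L lambda}; this description is standard in the literature on edge ideals of Ferrers graphs (see \cite{CN, V}). By Conca's work on ladder determinantal rings \cite{Conca}, together with the general theory of determinantal rings, such two-sided ladder determinantal rings for $2\times 2$ minors are Cohen-Macaulay normal domains that are F-rational in positive characteristic and have rational singularities in characteristic zero.

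The final step is to transfer these properties from $\Rini$ to $\Ri$. Cohen-Macaulayness and normality ascend along flat one-parameter families with Cohen-Macaulay (respectively normal) special fibers by standard results, using openness of the Cohen-Macaulay and normal loci together with Serre's criterion $R_1+S_2$. F-rationality is preserved under the SAGBI degeneration by the deformation theorem for F-rational singularities in flat families with Cohen-Macaulay fibers, and rational singularities in characteristic zero can be transferred either directly by Elkik-type deformation results or by reducing modulo $p$ and applying the equivalence between F-rational type and rational singularities. The main technical obstacle is verifying that the SAGBI family from Theorem~\ref{initial rees thm} meets the hypotheses of these deformation theorems---primarily flatness and Cohen-Macaulayness of fibers---after which the singularity-theoretic properties propagate cleanly.
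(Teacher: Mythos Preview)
Your proposal is correct and follows the same overall strategy as the paper: show that $\Rini=\init(\Ri)$ has the desired properties, then transfer them via the SAGBI deformation. The execution differs in two places. First, to establish that $\Rini$ is a Cohen-Macaulay normal domain, the paper invokes \cite[Corollary~5.3 and Theorem~5.9]{SVV} directly---general results on Rees algebras of edge ideals of bipartite graphs---rather than passing through the ladder determinantal description; in the paper that description is only recorded later, in Proposition~\ref{L L' K' J'}. Second, the paper does not separately argue that $\Rini$ is F-rational or has rational singularities and then deform these properties. Instead it cites \cite[Corollary~2.3]{CHV}, which packages the whole transfer: once the initial algebra is a finitely generated normal Cohen-Macaulay (hence normal semigroup) ring, $\Ri$ automatically has rational singularities in characteristic zero and is F-rational in positive characteristic. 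Your route through ladder determinantal singularity theory and explicit Elkik-type deformation results is valid and more transparent about what is being used, but the paper's version is essentially a one-line appeal to the black box in \cite{CHV}.
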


\begin{proof}
Let $\tau$ be a diagonal term order on $R=K[X]$ and let $\tau'$ be its extension to $R[t]$ as in~(\ref{tau'}).
By \cite[Corollary 5.3 and Theorem 5.9]{SVV}, $\Rini$ is a Cohen-Macaulay normal domain. Thus $\init(\Ri)$ is a Cohen-Macaulay normal domain by Theorem~\ref{initial rees thm}.
The conclusion then follows from \cite[Corollary~2.3]{CHV}.
\end{proof}

\section{The Defining Equations of the Rees Algebra}

In this section we obtain the defining equations of the Rees algebra of the ideal of $2\times 2$ minors of a sparse $2\times n$ matrix $X$. In addition to the setup and notation from Section~\ref{sectsetting}, we will also adopt the following.

\begin{setting}\label{setting 2}
Let $S=K[L_{\lambda}']$ and $T=K[L_{\lambda}]$ be the polynomial rings over $K$ in the variables that appear in $L_{\lambda}'$ and $L_{\lambda}$, respectively. 
To simplify our notation, we make the identifications:\begin{itemize} 
\item $x_{ui}=0$ if $u=1$ and $i>r+s$ or  if $u=2$ and $i\le r$, 
\item $y_{ij}=0$ if $i,j\in \{1,\ldots,r\}$ or $i,j \in \{r+s+1,\ldots, n\}$, 
\end{itemize}
to give meaning to all other $x_{ui}, y_{ij}$ with $u \in \{1,2\}$, $1 \le i< j \le n$. 
Define the following standard  presentations
of the symmetric algebra $\Sym$,
the Rees algebra $\Ri$ of $I$,
and the special fiber ring $\Fi$:
\begin{eqnarray*}
\sigma: S \longrightarrow \Sym,\\
\rho:S\longrightarrow \Ri,\\
\varphi:T\longrightarrow \Fi,
\end{eqnarray*}
where for all $i,j$,
$\sigma(x_{ij})=\rho(x_{ij})=x_{ij}$, $\sigma(y_{ij})=\varphi(y_{ij})=f_{ij}$, and $\rho(y_{ij})=f_{ij}t$. 
We let $\mathcal{L}=\ker(\sigma)$, $\mathcal{J}=\ker(\rho)$, and $\mathcal{K}=\ker (\varphi)$. The ideals $\mathcal{L}, \mathcal{J}, \mathcal{K}$ are called the defining ideals of the symmetric algebra, the Rees algebra, and the special fiber ring of $I$, respectively. 

We similarly define the presentations of the symmetric algebra,
of the Rees algebra, and of the special fiber ring of $\ini(I)$:
\begin{eqnarray*}
\sigma': S &\longrightarrow& \Symin,\\
\rho': S &\longrightarrow&\Rini= \init(\Ri), \\
\varphi':T&\longrightarrow& \Fini,\\
\end{eqnarray*}
where for all $i,j$,
$\sigma(x_{ij})=\rho(x_{ij})=x_{ij}$, $\sigma(y_{ij})=\varphi(y_{ij})=\ini(f_{ij})$, and $\rho(y_{ij})=\ini(f_{ij})t$. 
Moreover, let $\mathcal{L}'=\ker(\sigma')$, $\mathcal{J}'=\ker(\rho')$, and $\mathcal{K'}=\ker (\varphi')$ denote the defining ideals of the symmetric algebra, the Rees algebra, and the special fiber of $\ini(I)$, respectively. 
\end{setting}

\begin{remark} \label{definition ell, plucker}
Let $\ell_{uijk}=x_{ui}y_{jk}-x_{uj}y_{ik}+x_{uk}y_{ij}$ and $p_{ijkl}=y_{ij}y_{kl}-y_{ik}y_{jl}+y_{il}y_{jk}$
for $u \in\{1, 2\}$ and $1\le i<j<k<l \le n$. 
With the identifications from the Setting and Notation~\ref{setting 2}, $\ell_{uijk}, p_{ijkl}$ are in $S$ for all $u\in\{1,2\}$, $1\le i<j<k<l\le n$. We call the $\ell_{uijk}$'s the linear relations of $I$ and the $p_{ijkl}$'s the Pl\"{u}cker relations of $I$. Clearly  $\ell_{uijk} \in \mathcal{L}$ and $\ell_{uijk}, p_{ijkl} \in \mathcal{J}$ for all $u \in\{1,2\}$ and $1\le i<j<k<l \le n$.
\end{remark}

In the next proposition we describe the defining equations of the symmetric algebra, the special fiber, and the Rees algebra  of $\ini(I)$. In addition, we show that the $\ell_{uijk}$'s are indeed the defining equations of the symmetric algebra of $I$.

\begin{proposition} \label{L L' K' J'}
Adopt Setting and Notation~\ref{setting 2}. Then
\begin{enumerate}[$($a$)$]
\item $\mathcal{L} =(\ell_{uijk}\mid u = 1, 2; 1\le i <j <k\le n)$.
\item $\mathcal{L} '$ is generated by the $2 \times 2$ minors of $L_{\lambda}'$ that involve either the first column or the first row.
\item $\mathcal{K}' =I_2(L_{\lambda})$.
\item $\mathcal{J}'  =I_2(L_{\lambda}')=\mathcal{L}' +I_2(L_{\lambda})S$.
\end{enumerate}
Moreover, the natural generators of $I_2(L_{\lambda})$ and $I_2(L_{\lambda}')$ are Gr\"{o}bner bases of
$\mathcal{K}'$ and $\mathcal{J}'$, respectively, with respect to a diagonal term order.
\end{proposition}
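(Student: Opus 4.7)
The plan is to address the four assertions in order: (a) and (b) are handled by explicit syzygy computations for the ideals $I$ and $\inid(I)$ respectively, (c) is a direct citation, and (d) then follows by combining (b), (c), and the fiber-type property of $\Rini$. Beginning with (a): each $\ell_{uijk}$ lies in $\mathcal{L}$ because its image under $\sigma$ is the Laplace expansion along the first row of the $3\times 3$ determinant obtained from columns $i,j,k$ of $X$ with row $u$ of $X$ prepended, which vanishes by row repetition. To prove that these elements generate $\mathcal{L}$, I invoke Boocher's theorem \cite{Boocher} that the minimal free resolution of $R/I$ is obtained by pruning the Eagon-Northcott complex of $X$; the second differential of that complex produces, after identifying $y_{ij}\leftrightarrow f_{ij}$, precisely the relations $\ell_{uijk}$, so they generate the first syzygy module of $I$ and hence $\mathcal{L}$.

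For (b), I analyze the first syzygies of the Ferrers edge ideal $\inid(I)$ directly. Its generators $x_{1,a}x_{2,b}$ correspond to the boxes of the Ferrers diagram, and the Koszul relations between pairs sharing a common variable give two families of linear syzygies: $x_{2,b'}y_{ab}-x_{2,b}y_{ab'}$ (shared $x_{1,a}$) and $x_{1,a'}y_{ab}-x_{1,a}y_{a'b}$ (shared $x_{2,b}$). A direct inspection of $L_{\lambda}'$ identifies these, up to sign, with the $2\times 2$ minors of $L_{\lambda}'$ involving the first row, respectively the first column; because the added top row and left column of $L_{\lambda}'$ are disjoint and share no $2\times 2$ block, the two families exhaust all such minors. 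The remaining, quadratic-in-$y$, Taylor syzygies between disjoint generator pairs $m_{ab},m_{a'b'}$ with $a<a'$ are redundant by the explicit decomposition
\[
x_{1,a'}x_{2,b'}y_{ab}-x_{1,a}x_{2,b}y_{a'b'}=x_{1,a'}(x_{2,b'}y_{ab}-x_{2,b}y_{ab'})+x_{2,b}(x_{1,a'}y_{ab'}-x_{1,a}y_{a'b'}),
\]
which is well defined because the Ferrers staircase property guarantees that $(a,b')$ is also a box of the diagram. Hence $\mathcal{L}'$ is generated by the claimed minors.

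Part (c) is the direct content of \cite[Proposition~5.1]{CN} applied to $I_\lambda=\inid(I)$. For (d), the fiber-type property $\mathcal{J}'=\mathcal{L}'+\mathcal{K}'S$ is provided by \cite[Theorem~3.1]{V}, and combined with (b) and (c) yields $\mathcal{J}'=\mathcal{L}'+I_2(L_\lambda)S$. A short combinatorial check identifies this sum with $I_2(L_{\lambda}')$: every $2\times 2$ minor of $L_{\lambda}'$ either uses only rows and columns of $L_\lambda$ (hence lies in $I_2(L_\lambda)$) or uses the added first row or column (hence lies in $\mathcal{L}'$ by (b)), and no minor uses both since the added strips are disjoint. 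For the Gröbner basis assertion, both $I_2(L_\lambda)$ and $I_2(L_{\lambda}')$ are $2\times 2$ ladder determinantal ideals, one-sided and two-sided respectively, and their natural generators form a Gröbner basis with respect to a diagonal term order by Conca's theorem on ladder determinantal ideals \cite{Conca}.

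The main obstacle is the combinatorial matching in (b): one must correctly pair each Koszul syzygy of the monomial ideal with the corresponding $2\times 2$ minor of $L_{\lambda}'$, keeping track of the fact that the column labels of $L_\lambda$ decrease from left to right, and justify via the staircase property of the Ferrers diagram that the quadratic Taylor syzygies of disjoint generator pairs are generated by the linear ones. Everything else reduces to an application of results already in the literature.
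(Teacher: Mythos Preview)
Your argument is correct, and for parts (a), (c), (d), and the Gr\"obner basis assertion it matches the paper's proof almost verbatim (the paper cites Narasimhan~\cite{Nar} rather than Conca~\cite{Conca} for the ladder Gr\"obner basis, but either reference suffices). The only substantive difference is in (b): the paper simply invokes \cite[Theorem~5.1]{HHV}, which describes the defining ideal of the symmetric algebra for monomial ideals satisfying the $\ell$-exchange property, together with the fact that Ferrers ideals have this property \cite[Lemma~6.3]{LS}. Your direct Taylor-syzygy computation is more elementary and self-contained---it avoids the $\ell$-exchange machinery entirely and makes transparent why the Ferrers staircase shape is exactly what is needed to reduce the disjoint-pair syzygies to the linear ones---at the cost of a short combinatorial verification. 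Both routes reach the same generating set.

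One minor correction: the Taylor syzygies between disjoint generator pairs that you display are linear in the $y_{ab}$, not ``quadratic-in-$y$'' as you write; they are degree two in the $x$-variables. This is only a slip of terminology and does not affect the validity of your decomposition.
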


\begin{proof}
(a) By \cite[Theorem~4.1 and its proof]{Boocher} the presentation matrix of $I$ is obtained from the Eagon-Northcott resolution by ``pruning''.
In particular,  the relations on the generators~$f_{ij}$ of~$I$ arise from taking the $3 \times 3$ minors of
the $3 \times n$ matrix obtained from $X$ by doubling one of the two rows.
These yield precisely the relations on the symmetric algebra of the given form.

Item (b) follows from \cite[Theorem~5.1]{HHV} since Ferrers ideals satisfy the $\ell$-exchange property, see e.g.~\cite[Lemma~6.3]{LS}. 

(c) This follows from \cite[Proposition~5.1]{CN}.

(d) Since $\ini(I)$ is a Ferrers ideal, it is the edge ideal of a bipartite graph. Therefore the Rees algebra of $\ini(I)$ is of fiber type by \cite[Theorem~3.1]{V}.

The last part follows from the fact that $I_2(L_{\lambda})$ and $I_2(L_{\lambda}')$ are ladder determinantal ideals. Therefore, by~\cite[Corollary~3.4]{Nar}, the $2\times 2$ minors of $L_{\lambda}$ and $L_{\lambda}'$ are Gr\"{o}bner bases of $I_2(L_{\lambda})$ and $I_2(L_{\lambda}')$, respectively, with respect to a diagonal term order.
\end{proof}

The description of the defining equations of the Rees algebra of $\ini(I)$, in combination with our result from the previous section that shows that $\init(\Ri)=\Rini$, allows us to deduce that the Rees algebra of $I$ is a Koszul algebra. 

\begin{corollary}\label{Rees koszul}
Let $X$ be a sparse $2\times n$ matrix, $I$ the ideal of $2\times 2$ minors of $X$. Then $\Ri$ is a Koszul algebra and $I$ has linear powers. In particular, ${\rm{reg}} (I^k)=2k$ for all $k \in \mathbb{N}$.
\end{corollary}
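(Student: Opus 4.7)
The plan is to first show that $\Rini$ is Koszul using the quadratic Gr\"obner basis from Proposition~\ref{L L' K' J'}(d), next transfer Koszulness to $\Ri$ via the SAGBI deformation of Theorem~\ref{initial rees thm}, and finally deduce the linear-resolution and regularity statements using Blum's theorem~\cite{Bl}.

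For the first step I would invoke Proposition~\ref{L L' K' J'}(d): the defining ideal $\mathcal{J}'$ of $\Rini$ equals $I_2(L_\lambda')$ and its natural generators form a Gr\"obner basis with respect to a diagonal term order on $S$. Consequently the corresponding initial ideal of $\mathcal{J}'$ is a squarefree quadratic monomial ideal, so the quotient of $S$ by this initial ideal is Koszul by Fr\"oberg's theorem. Koszulness then lifts from this initial algebra to $S/\mathcal{J}'=\Rini$ via the standard Gr\"obner deformation (rigidity of Hilbert series plus upper-semicontinuity of Betti numbers), so $\Rini$ is Koszul.

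Next I transfer Koszulness from $\Rini$ to $\Ri$ using Theorem~\ref{initial rees thm}, which asserts both $\init(\Ri)=\Rini$ and that the natural algebra generators of $\Ri$ form a SAGBI basis with respect to $\tau'$. A SAGBI basis yields a flat one-parameter family whose generic fiber is $\Ri$ and whose special fiber is $\Rini$, in the same spirit as the Gr\"obner degenerations used elsewhere in the paper, for instance in \cite[Corollary~2.3]{CHV}. Koszulness lifts from the initial algebra to the original algebra along such SAGBI degenerations, so $\Ri$ is Koszul.

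Finally, once $\Ri$ is Koszul, Blum's theorem~\cite{Bl} guarantees that $I$ and all its powers $I^k$ admit a linear $R$-resolution. Since $I$ is generated in degree two by the $2\times 2$ minors of $X$, the ideal $I^k$ is equigenerated in degree $2k$, and the linearity of the resolution forces $\reg(I^k)=2k$ for every $k\in\NN$. The principal obstacle is the second step: one needs the SAGBI analogue of the Gr\"obner deformation theorem stating that Koszulness lifts from the special fiber to the generic fiber of a flat degeneration. This rests on the upper-semicontinuity of the Poincar\'e series of the residue field in a flat family of standard graded $K$-algebras with constant Hilbert series, combined with the characterization of Koszul algebras via linearity of the minimal free resolution of $K$.
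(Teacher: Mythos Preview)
Your proposal is correct and follows essentially the same route as the paper: Proposition~\ref{L L' K' J'}(d) gives a quadratic Gr\"obner basis for $\mathcal{J}'$, hence $\Rini$ is Koszul; Theorem~\ref{initial rees thm} identifies $\Rini$ with $\init(\Ri)$; and Blum's result yields linear powers. The only refinement is that the ``principal obstacle'' you flag is already dispatched by a precise citation: the transfer of Koszulness along a SAGBI degeneration is exactly \cite[Corollary~2.6]{CHV}, so you need not reprove it via upper-semicontinuity of Poincar\'e series.
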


\begin{proof}
The defining ideal of $\Rini$ has a Gr\"{o}bner basis of quadrics by Proposition~\ref{L L' K' J'}, so $\Rini$ is a Koszul algebra, see~\cite{Fro} and~\cite[Theorem~6.7]{EH}. By Theorem~\ref{initial rees thm} we have $\ini(\Ri)=\Rini$ and since $\Rini$ is a Koszul algebra, then $\Ri$ is a Koszul algebra, by \cite[Corollary~2.6]{CHV}.
According to Blum~\cite[Corollary~3.6]{Bl}, if the Rees algebra of an ideal is Koszul, then the ideal has linear powers, i.e., the powers of the ideal have a linear resolution. 
\end{proof}

We now are ready to prove the main result of this article.  

\begin{theorem}\label{gens of rees}
Let $X$ be a sparse $2\times n$ matrix, $I$ the ideal of $2\times 2$ minors of $X$. Adopt Setting and Notation~\ref{setting 2}.
\begin{enumerate}[$($a$)$]
\item The defining ideal $\mathcal{J}$  of $\Ri$
is generated by the linear relations $\ell_{uijk}$ for $u \in\{1, 2\}$ and $1\le i<j<k\le n$, and the Pl\"{u}cker relations $p_{ijkl}$ for $1\le i<j<k<l \le n$. Moreover, these generators form a Gr\"obner basis of $\mathcal{J}$ with respect to a suitable weight.
\item The Rees algebra $\Ri$ is of fiber type, i.e., $$\mathcal{J}=\mathcal{L}+\mathcal{K}S.$$
\item The Pl\"ucker relations $p_{ijkl}$ for $1\le i<j<k<l\le n$ are the defining equations of $\Fi$.
\end{enumerate}
\end{theorem}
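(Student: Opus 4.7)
The plan is to lift the generating set of $\mathcal{J}'$ described in Proposition~\ref{L L' K' J'} to a generating set of $\mathcal{J}$ via a weight-order Gr\"obner degeneration, with flatness of this degeneration guaranteed by Theorem~\ref{initial rees thm}. Since $(\ell_{uijk}, p_{ijkl}) \subseteq \mathcal{J}$ by Remark~\ref{definition ell, plucker}, all three parts reduce to establishing the reverse containment in part~(a).

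First I would define a weight $\omega$ on the variables of $S$ by setting $\omega(x_{1i}) = 2(n-i)$, $\omega(x_{2j}) = n-j$, and $\omega(y_{ij}) = \omega(x_{1i}) + \omega(x_{2j})$, with the understanding that variables identified with zero in Setting~\ref{setting 2} are omitted. With this choice the leading term of each nonzero minor $f_{ij}$ is its diagonal product, so $\omega$ is compatible with the diagonal term order $\tau$ on $R$ and with its extension $\tau'$ to $R[t]$. A direct weight comparison shows that for each linear relation $\ell_{uijk} = x_{ui}y_{jk} - x_{uj}y_{ik} + x_{uk}y_{ij}$, exactly two of the three terms tie for the maximal $\omega$-weight, and the resulting binomial $\inw(\ell_{uijk})$ is a $2 \times 2$ minor of $L_\lambda'$ involving either its first row or its first column. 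Similarly, for the Pl\"ucker relation $p_{ijkl}$ the terms $y_{ik}y_{jl}$ and $y_{il}y_{jk}$ tie for the maximal $\omega$-weight, so $\inw(p_{ijkl}) = y_{il}y_{jk} - y_{ik}y_{jl}$, which is a $2\times 2$ minor of $L_\lambda$. Thus, by Proposition~\ref{L L' K' J'}(d), the set $\{\inw(\ell_{uijk}), \inw(p_{ijkl})\}$ generates $\mathcal{J}'$ and is in fact a Gr\"obner basis of $\mathcal{J}'$ with respect to a diagonal refinement of $\omega$.

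The crucial step is to deduce $\inw(\mathcal{J}) = \mathcal{J}'$. Theorem~\ref{initial rees thm} yields $\init(\mathcal{R}(I)) = \mathcal{R}(\ini(I))$, so $\mathcal{R}(I)$ and $\mathcal{R}(\ini(I))$ share the same bigraded Hilbert function, and hence so do $S/\mathcal{J}$ and $S/\mathcal{J}'$. Flatness of the $\omega$-degeneration implies that $S/\mathcal{J}$ and $S/\inw(\mathcal{J})$ also have the same Hilbert function, whence $S/\mathcal{J}'$ and $S/\inw(\mathcal{J})$ agree in Hilbert function. Combined with the containment $\mathcal{J}' = (\inw(\ell_{uijk}), \inw(p_{ijkl})) \subseteq \inw(\mathcal{J})$, this forces $\inw(\mathcal{J}) = \mathcal{J}'$. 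The standard Gr\"obner basis lifting lemma then yields $\mathcal{J} = (\ell_{uijk}, p_{ijkl})$, with these elements forming a Gr\"obner basis of $\mathcal{J}$ with respect to $\omega$, proving~(a). For~(c), each $\ell_{uijk}$ lies in $(x_{ij})S$, so $\mathcal{J} + (x_{ij})S = (x_{ij})S + (p_{ijkl})S$ and $\mathcal{F}(I) = T/(p_{ijkl})T$, giving $\mathcal{K} = (p_{ijkl})$. Part~(b) then follows from~(a) together with Proposition~\ref{L L' K' J'}(a), which identifies $\mathcal{L} = (\ell_{uijk})$.

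The main obstacle is the identification $\inw(\mathcal{J}) = \mathcal{J}'$: this requires a careful compatibility check between the weight $\omega$ on $S$ and the $\tau'$-initial algebra of $\mathcal{R}(I)$ in $R[t]$, so that Theorem~\ref{initial rees thm} translates into the needed bigraded Hilbert function equality on the source $S$. The explicit determination of $\inw(\ell_{uijk})$ and $\inw(p_{ijkl})$ is a routine case analysis but rests on the specific weight choice above; some care is needed when several $x_{ui}$ or $y_{ij}$ vanish under the identifications of Setting~\ref{setting 2}, since this forces the relevant relations to collapse to binomials or to simpler trinomials before taking $\inw$.
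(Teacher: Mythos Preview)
Your proposal is correct and follows essentially the same approach as the paper: choose a weight on $S$ so that the initial forms of the $\ell_{uijk}$ and $p_{ijkl}$ are precisely the $2\times 2$ minors of $L_\lambda'$, establish $\inw(\mathcal{J})=\mathcal{J}'$, and conclude that $G=\{\ell_{uijk},p_{ijkl}\}$ is a Gr\"obner basis of~$\mathcal{J}$. The only differences are cosmetic: the paper uses the weight $\pi(x_{1j})=1$, $\pi(x_{2j})=j$, $\pi(y_{ij})=j+2$ rather than your choice, and it obtains the equality $\inid_\pi(\mathcal{J})=\mathcal{J}'$ by invoking Sturmfels' theorem relating the defining ideal of an initial algebra to the initial ideal of the presentation, whereas you derive the same equality by a direct Hilbert-function comparison---both routes rest on Theorem~\ref{initial rees thm} in the same way.
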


\begin{proof} 
(a) We let $G$ be the set of all $\ell_{uijk}$ and $p_{ijkl}$ as in Remark~\ref{definition ell, plucker}. 
We claim that $\mathcal{J}=(G)$. Define a weight $\omega$ on $R[t]$ and a weight $\pi$ on $S$ as follows:
$\omega(x_{1j})= 1,$ $\omega(x_{2j})= j,$ $\omega(t)=1,$
$\pi(x_{1j})= 1,$ $\pi(x_{2j})= j,$ $\pi(y_{ij})=\omega(\ini(f_{ij})t)=j+2.$

By Proposition~\ref{L L' K' J'}, $I_2(L_{\lambda}')$ is the defining ideal of $\Rini= \init(\Ri)$.
The weight $\omega$ represents the term order $\tau$, that is, $\inid_{\omega}(f_{ij})=\ini(f_{ij})$. Therefore, by~\cite[Proposition in Lecture 3.1]{Stur} (see also \cite[Theorem~11.4]{Stur2}) we conclude that $I_2(L_\lambda')=\inid_\pi(\mathcal{J})$.

Since $G \subseteq \mathcal{J}$, to prove that $G$ is a Gr\"obner basis of $\mathcal{J}$ with respect to $\pi$, it suffices to prove that each $2\times 2$ minor of $L_\lambda'$ is the leading form with respect to $\pi$ of some element in $G$. This also implies that $\mathcal{J}$ is generated by $G$.

We first analyze the $2 \times 2$ minors of $L'_\lambda$ that involve the first column.
These are of the form $E_{1ijk} = x_{1i} y_{jk} - x_{1j} y_{ik}$ with $i<j<k$ and they are homogeneous of weight $k+3$. Since $x_{1k}y_{ij}$ has weight $j+3$, $E_{1ijk}$ is the leading form of $\ell_{1ijk} \in G$.

We next analyze the $2 \times 2$ minors of $L'_\lambda$ that involve the first row.
These are of the form $E_{2ijk} = x_{2k} y_{ij} - x_{2j} y_{ik}$ with $i<j<k$ and they are homogeneous of weight $j+k+2$. Since $x_{2i}y_{jk}$ has weight $i+k+2$, $E_{2ijk}$ is the leading form of $\ell_{2jik} \in G$.

It remains to prove that each $2 \times 2$ minor of $L_\lambda$ is a unit multiple of a leading form of an element in $G$. Such a minor is of the form $F_{ijkl} = y_{il} y_{jk} - y_{ik} y_{jl}$ for some $i<j<k<l$ and it is homogeneous of weight $k+l+4$. Since $y_{ij}y_{kl}$ has weight $j+l+4$, $F_{ijkl}$ is the leading form of $p_{ijkl} \in G$.

Items (b) and (c) follow immediately from (a).
\end{proof}

The following corollary is a direct consequence of Theorem~\ref{gens of rees}.

\begin{corollary} \label{ini F}
Let $X$ be a sparse $2\times n$ matrix, $I$ the ideal of $2\times 2$ minors of $X$, and $\tau$ a diagonal term order on $R=K[X]$. Then $\ini(\Fi)=\Fini$. Moreover, the $2\times 2$ minors of $X$ are a SAGBI basis of the $K$-algebra $\Fi$, the Pl\"ucker relations form a Gr\"obner basis  for $\mathcal{K}$, and $\Fi$ is a Koszul algebra. 
\end{corollary}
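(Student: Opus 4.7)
The plan is to extract all four assertions from Theorem~\ref{gens of rees} and the arguments used in its proof. First, I would observe that by Theorem~\ref{gens of rees}(c), $\mathcal{K}$ is generated by the Pl\"ucker relations $p_{ijkl}$, and in the proof of Theorem~\ref{gens of rees} the weight $\pi$ was shown to satisfy $\inid_\pi(p_{ijkl}) = F_{ijkl} = y_{il}y_{jk} - y_{ik}y_{jl}$. By Proposition~\ref{L L' K' J'}(c), the $F_{ijkl}$'s are exactly the natural generators of $\mathcal{K}' = I_2(L_\lambda)$. Since $\inid_\pi(\mathcal{K}) = \mathcal{K}'$ is generated by the $\pi$-leading forms of the $p_{ijkl}$'s, these relations form a Gr\"obner basis of $\mathcal{K}$ with respect to $\pi$ (and with respect to any term order refining $\pi$).

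Next, to establish $\ini(\Fi) = \Fini$, I would run a Hilbert function comparison. The presentations $\Fi \cong T/\mathcal{K}$ and $\Fini \cong T/\mathcal{K}'$ are isomorphisms of standard-graded $K$-algebras with $\deg y_{ij} = 2$. Since $\mathcal{K}$ is standard-graded homogeneous and $\mathcal{K}' = \inid_\pi(\mathcal{K})$, the initial-form deformation with respect to $\pi$ preserves the standard Hilbert function, so $\Fi$ and $\Fini$ have the same Hilbert function. On the other hand, a standard argument (pick a homogeneous $K$-basis of $\Fi_d$ and perform Gaussian elimination so that the leading monomials are pairwise distinct) yields $\dim_K \Fi_d = \dim_K \ini(\Fi)_d$ for every $d$. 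The inclusion $\Fini \subseteq \ini(\Fi)$ is immediate because each algebra generator $\ini(f_{ij})$ of $\Fini$ lies in $\ini(\Fi)$. Combined with the equality of Hilbert functions, this forces $\Fini = \ini(\Fi)$, which is precisely the SAGBI basis statement for the generators $f_{ij}$ of $\Fi$.

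Finally, for Koszulness I would invoke Proposition~\ref{L L' K' J'}, which gives that the natural quadratic generators of $\mathcal{K}' = I_2(L_\lambda)$ form a Gr\"obner basis with respect to a diagonal term order; hence $\Fini$ is Koszul by Fr\"oberg's theorem. The SAGBI degeneration $\ini(\Fi) = \Fini$ then transfers Koszulness to $\Fi$, exactly as in Corollary~\ref{Rees koszul}. Alternatively, and perhaps more directly, one can refine $\pi$ to a term order and argue that the quadratic Pl\"ucker relations constitute a quadratic Gr\"obner basis of $\mathcal{K}$ itself, yielding Koszulness of $\Fi$ by Fr\"oberg. The only point requiring care is the Hilbert function comparison, where one must verify that the homogeneity of $\mathcal{K}$ and $\mathcal{K}'$ is compatible with the $\pi$-deformation; this is routine since all Pl\"ucker relations are homogeneous of $y$-degree~$2$.
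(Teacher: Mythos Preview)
Your approach is correct and genuinely different from the paper's. The paper bypasses Hilbert functions entirely and applies the SAGBI lifting criterion \cite[Proposition~1.1]{CHV}: since $p_{ijkl}\in\mathcal{K}$ and $p_{ijkl}=F_{ijkl}+y_{ij}y_{kl}$, one has $\varphi(F_{ijkl})=-\varphi(y_{ij}y_{kl})$ with $\ini\varphi(F_{ijkl})=\ini\varphi(y_{ij}y_{kl})$, so every defining relation of $\Fini$ lifts; this yields the SAGBI basis statement, hence $\ini(\Fi)=\Fini$, in one stroke. The Gr\"obner basis claim then comes from \cite[Corollary~11.6]{Stur2}, and Koszulness follows. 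Your route inverts the order: you first extract the Gr\"obner basis information from Theorem~\ref{gens of rees} and then squeeze Hilbert functions to obtain the SAGBI statement. Both are short; the paper's is a direct verification of the subduction criterion, whereas yours reuses more of the machinery already built for $\mathcal{J}$.

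One point needs tightening. You assert $\inid_\pi(\mathcal{K})=\mathcal{K}'$ at the outset, but from $\inid_\pi(p_{ijkl})=F_{ijkl}$ alone you only obtain $\mathcal{K}'\subseteq\inid_\pi(\mathcal{K})$; passing from initial forms of generators to the initial ideal is exactly what is at stake. There are two clean fixes. First, this containment is in fact all your Hilbert-function argument needs: it gives
\[
\dim_K(T/\mathcal{K}')_d\;\geq\;\dim_K(T/\inid_\pi(\mathcal{K}))_d\;=\;\dim_K(T/\mathcal{K})_d\;=\;\dim_K(\ini(\Fi))_d\;\geq\;\dim_K(\Fini)_d\;=\;\dim_K(T/\mathcal{K}')_d,
\]
forcing equality throughout and delivering both $\ini(\Fi)=\Fini$ and $\inid_\pi(\mathcal{K})=\mathcal{K}'$ simultaneously. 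Second, and more directly: the proof of Theorem~\ref{gens of rees} establishes $\inid_\pi(\mathcal{J})=\mathcal{J}'$ via \cite{Stur,Stur2}; since $\mathcal{K}=\mathcal{J}\cap T$ and $\mathcal{K}'=\mathcal{J}'\cap T$, for any $f\in\mathcal{K}$ one has $\inid_\pi(f)\in\inid_\pi(\mathcal{J})\cap T=\mathcal{J}'\cap T=\mathcal{K}'$, giving the reverse containment. Either way the gap is minor and your argument goes through.
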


\begin{proof}
By Proposition~\ref{L L' K' J'}, the defining equations of $\Fini$ are the $2 \times 2$ minors of $L_\lambda$, i.e.,  $\Fini=K[\ini(f_{ij})]\cong T/I_2(L_\lambda)$. For any $i<j<k<l$, let $F_{ijkl}=y_{il} y_{jk} - y_{ik} y_{jl}$ be a generator of $I_2(L_\lambda)$.
In the proof of Theorem~\ref{gens of rees} we showed that $p_{ijkl}=F_{ijkl}+y_{ij}y_{kl}$. Hence $\varphi(F_{ijkl})=-\varphi (y_{ij}y_{kl})$ and $\ini \varphi(F_{ijkl})= \ini \varphi (y_{ij}y_{kl}) $. Then by \cite[Proposition~1.1]{CHV} the generators of $I$ are a SAGBI basis of $\Fi$, in particular $\ini(\Fi)=\Fini$. Moreover, the Pl\"ucker relations form a Gr\"obner basis of $\mathcal{K}$ by~\cite[Corollary~11.6]{Stur2}, hence $\Fi$ is a Koszul algebra.
\end{proof}

Finally, we obtain the analogous result for $\Fi$ as in Corollary~\ref{rees sing}.

\begin{corollary}\label{FI CM normal}
Let $X$ be a sparse $2\times n$ matrix and $I$ the ideal of $2\times 2$ minors of $X$. Then $\Fi$ has rational singularities if the field $K$ has characteristic $0$ and is $F$-rational if $K$ has positive characteristic. In particular, $\Fi$ is a Cohen-Macaulay normal domain.
\end{corollary}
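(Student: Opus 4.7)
The plan is to mirror exactly the strategy used in the proof of Corollary~\ref{rees sing}, but applied to the special fiber ring rather than the Rees algebra. The two ingredients needed are: (i) the corresponding singularity/Cohen-Macaulay/normality statements for the initial algebra $\Fini$, and (ii) a deformation principle that transfers these properties from $\ini(\Fi)$ to $\Fi$. The SAGBI-type equality $\ini(\Fi)=\Fini$ that makes such a transfer possible has already been established in Corollary~\ref{ini F}, so the real task is just to put the pieces together.

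First, I would identify $\Fini$ with the ladder determinantal ring $T/I_2(L_\lambda)$ using Proposition~\ref{L L' K' J'}(c). Since $L_\lambda$ is a one-sided ladder whose entries are distinct variables, the ideal $I_2(L_\lambda)$ is a classical ladder determinantal ideal of $2\times 2$ minors. For such rings it is well-known (see for instance \cite{Conca, Nar}, which are already cited in the excerpt, and the standard references on ladder determinantal varieties) that they are Cohen-Macaulay normal domains with rational singularities in characteristic zero and $F$-rational in positive characteristic. Therefore $\Fini$ enjoys all of these properties.

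Next, by Corollary~\ref{ini F} we have $\ini(\Fi)=\Fini$, so $\ini(\Fi)$ is a Cohen-Macaulay normal domain with rational singularities (resp.\ $F$-rational). The conclusion then follows by invoking the deformation result \cite[Corollary~2.3]{CHV}, exactly as in the proof of Corollary~\ref{rees sing}: this principle says that if an initial algebra (with respect to a term order) has any of the properties Cohen-Macaulay, normal, domain, rational singularities, or $F$-rational, then the original algebra does as well.

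The main potential obstacle is making sure the cited singularity statements for ladder determinantal rings apply in the precise generality needed here (one-sided ladder of $2\times 2$ minors with the shape dictated by the partition $\lambda$); but this is covered by the references on ladder determinantal ideals already invoked earlier in the paper, so no new work is required. The argument is thus short and essentially parallel to Corollary~\ref{rees sing}.
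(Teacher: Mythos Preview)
Your proposal is correct and follows the paper's strategy exactly: establish that $\Fini$ is a Cohen--Macaulay normal domain, invoke Corollary~\ref{ini F} for the equality $\ini(\Fi)=\Fini$, and then apply \cite[Corollary~2.3]{CHV}. The only cosmetic difference is in how the properties of $\Fini$ are justified---the paper deduces normality from \cite[Theorem~7.1]{SVV} together with Corollary~\ref{rees sing} and Cohen--Macaulayness from Hochster's theorem \cite{Ho}, rather than via the ladder determinantal description you use, but either route is valid.
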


\begin{proof}
By \cite[Theorem 7.1]{SVV} and Corollary~\ref{rees sing},
$\Fini$ is normal.
Thus, by \cite[Theorem 1]{Ho}
$\Fini$ is Cohen-Macaulay as well.
By Corollary~\ref{ini F}, we have $\ini(\Fi)=\Fini$.
The result now follows from \cite[Corollary~2.3]{CHV}.
\end{proof}

\begin{corollary}\label{invFi}
Let $X$ be a sparse $2\times n$ matrix and $I$ the ideal of $2\times 2$ minors of $X$.  Then
\begin{enumerate}[$($a$)$]
\item $\Fi$ has dimension $\min\{n+s-1,2n-r-2\}$.
\item $\Fi$ is Gorenstein if $r\le 2$.
\item ${\rm{reg}}(\Fi)=\min\{n-3,n-r-1,r+s-1\}$. 
\item The $a$-invariant of $\Fi$ is $a(\Fi)=\min\{-r-s,r-n,-s-2\}$, unless $n=r+s$ in which case $a(\Fi)=-n$ if $r=1$ or $a(\Fi)=-n+1$ otherwise.
\end{enumerate}
\end{corollary}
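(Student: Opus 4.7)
The plan is to reduce the computation of all four invariants of $\Fi$ to the corresponding computation for the ladder determinantal ring $\Fini = T/I_2(L_\lambda)$, for which formulas are available in the ladder determinantal literature already used in the paper.

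First I would invoke Corollary~\ref{ini F} to get $\ini(\Fi) = \Fini$. Since $\mathcal{K}$ is generated by the Pl\"ucker relations, which are homogeneous of degree $2$ in the $y_{ij}$, the ideals $\mathcal{K}$ and $\mathcal{K}'$ are both homogeneous in $T$ with respect to the standard grading, so $\Fi$ and $\Fini$ share the same Hilbert series as standard graded $K$-algebras. Both are Cohen-Macaulay: $\Fi$ by Corollary~\ref{FI CM normal} and $\Fini$ by the ladder determinantal theory. Consequently they share the same dimension, Castelnuovo-Mumford regularity, and $a$-invariant; and since the Gorenstein property of a Cohen-Macaulay standard graded $K$-algebra is detected by the symmetry of its $h$-polynomial, which depends only on the Hilbert series, $\Fi$ is Gorenstein if and only if $\Fini$ is.

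This reduces (a), (c), and (d) to computations on the one-sided ladder $L_\lambda$ of shape $\lambda = (\underbrace{n-r,\ldots,n-r}_r, n-r-1, n-r-2, \ldots, n-r-s)$. For (a), I would apply the dimension formula for one-sided ladder determinantal rings of $2$-minors: the two candidates $n+s-1$ and $2n-r-2$ correspond to the lengths of two extremal lattice paths across $L_\lambda$, and the minimum is the one realized by the shape. For (c), the regularity formula for one-sided ladders of $2$-minors expresses $\reg(\Fini)$ directly in terms of the ladder shape, and the three values $n-3$, $n-r-1$, $r+s-1$ capture the three characteristic lengths visible in the Ferrers diagram of $\lambda$. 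Item (d) then follows from (a) and (c) via the standard identity $a(A) = \reg(A) - \dim(A)$, valid for any Cohen-Macaulay standard graded $K$-algebra; a short case analysis on whether $n = r+s$ or $n > r+s$, and on which of the three regularity candidates is smallest, yields the closed forms claimed, including the two exceptional values $-n$ and $-n+1$ in the boundary case $n = r+s$.

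The main obstacle is (b), the Gorenstein statement. After the reduction to $\Fini$, the task is to show that $T/I_2(L_\lambda)$ is Gorenstein whenever $r \le 2$. I would apply the combinatorial Gorenstein criterion for one-sided ladder determinantal rings of $2$-minors (due to Conca) to the shape of $L_\lambda$: for $r = 1$, $\lambda$ is strictly decreasing and $L_\lambda$ is a pure staircase, whose $2$-minor ring is known to be Gorenstein; for $r = 2$, $L_\lambda$ is a staircase capped by a rectangular block of height $2$ and the criterion is again satisfied. The delicate step is to match the criterion cleanly against the explicit shape of $L_\lambda$; I would calibrate the argument by checking small cases (say $n \le 6$) directly in \emph{Macaulay2}, both to confirm the expected symmetry of the $h$-polynomial and to guide the general combinatorial verification.
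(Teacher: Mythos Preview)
Your proposal is correct and follows essentially the same route as the paper: reduce all four invariants to $\Fini = T/I_2(L_\lambda)$ via Corollary~\ref{ini F} and the Cohen--Macaulayness of both algebras, then read off the values from the ladder determinantal literature (the paper cites \cite[Corollary~2.6, Corollary~2.3, Corollary~2.5]{CHV}, \cite[Proposition~2.5]{Conca}, and \cite[Proposition~5.7]{CN} at the corresponding points). One small caveat: your transfer of the Gorenstein property via symmetry of the $h$-polynomial requires Stanley's theorem, which needs the \emph{domain} hypothesis, not just Cohen--Macaulayness---this is satisfied here since both $\Fi$ and $\Fini$ are normal domains, but you should say so explicitly.
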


\begin{proof}
(a) The formula follows from \cite[Corollary~2.6]{CHV} in combination with \cite{SVV} or \cite[Section~4]{Conca}.

(b) By \cite[Proposition~2.5]{Conca} $\Fini$ is Gorenstein if and only if $r\le 2$. The statement now follows from \cite[Corollary~2.3]{CHV}.

(c) From \cite[Proposition~5.7]{CN} we obtain the regularity of $\Fini$,
 which simplifies to ${\rm{reg}}(\Fini)=\min\{n-3,n-r-1,r+s-1\}$. By  \cite[Corollary~2.5]{CHV} we have $a(\Fini)=a(\Fi)$ and since $\Fini$ is Cohen-Macaulay, then ${\rm{reg}}(\Fi)={\rm{reg}}(\Fini)$.

(d) The result follows from the fact that $a(\Fi)=-\dim \Fi +{\rm{reg}}(\Fi)$. Alternatively, the formula is a special case of \cite[Corollary~9]{GK}. In the special case $n=r+s$, we have $\dim \Fi=2n-r-2$ and ${\rm{reg}}(\Fi)=\min\{n-3, n-r-1\}$. 
\end{proof}

\begin{corollary}\label{eFI}
Let $X$ be a sparse $2\times n$ matrix and $I$ the ideal of $2\times 2$ minors of $X$. Let $\lambda$ be the partition for the Ferrers ideal $\ini(I_{\lambda})$.
Then the normalized Hilbert series of $\Fi$ is $p_{\lambda}(z)=1+h_1(\lambda)z+\ldots +h_{r+s-1}(\lambda)z^{r+s-1}$, where $h_k(\lambda)=\binom{r+s-1}{k}\binom{n-r-1}{k}-\binom{s+1}{k+1}\binom{n-3}{k-1}$ with the convention that $\binom{j}{i}=0$ if $j<i$.
Moreover, the multiplicity of $\Fi$ is $e(\Fi)=\binom{n+s-2}{n-r-1}-\binom{n+s-2}{n-1}$.
\end{corollary}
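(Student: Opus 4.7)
The plan is to pass to the initial algebra, reducing the question to a Hilbert series computation for the Ferrers-ideal special fiber $T/I_2(L_\lambda)$. By Corollary~\ref{ini F} the natural generators of $\Fi$ form a SAGBI basis for the diagonal order, so $\ini(\Fi)=\Fini\cong T/I_2(L_\lambda)$. Since passing to the initial algebra preserves Hilbert series, $\Fi$ and $T/I_2(L_\lambda)$ share the same $h$-polynomial, and it suffices to compute the normalized Hilbert series of $T/I_2(L_\lambda)$.

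To obtain the explicit formula for $h_k(\lambda)$, I would invoke the Hilbert series results for Ferrers-ideal special fibers from \cite{CN}, specialized to our partition $\lambda=(\underbrace{n-r,\ldots,n-r}_{r\ \mathrm{times}},n-r-1,\ldots,n-r-s)$. Geometrically, $\lambda$ is an $r\times(n-r)$ rectangle augmented by an $s$-step staircase, so the $h$-vector naturally splits into two contributions: the $h$-vector of the toric ring attached to the enlarged $(r+s)\times(n-r)$ rectangular Ferrers shape, namely $\binom{r+s-1}{k}\binom{n-r-1}{k}$, minus a correction $\binom{s+1}{k+1}\binom{n-3}{k-1}$ accounting for the boxes removed to form the staircase. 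Identifying and verifying this decomposition is the main technical step; once established it yields
\[
h_k(\lambda)=\binom{r+s-1}{k}\binom{n-r-1}{k}-\binom{s+1}{k+1}\binom{n-3}{k-1}.
\]

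For the multiplicity, since $\Fi$ is Cohen-Macaulay by Corollary~\ref{FI CM normal}, we have $e(\Fi)=p_\lambda(1)=\sum_{k\ge 0}h_k(\lambda)$. A short application of Vandermonde--Chu, using the symmetry $\binom{a}{b}=\binom{a}{a-b}$, rewrites each of the two subsums as a coefficient of $x^{n-r-1}$ (respectively $x^{n-1}$) in $(1+x)^{(r+s-1)+(n-r-1)}=(1+x)^{n+s-2}$, producing $\binom{n+s-2}{n-r-1}$ and $\binom{n+s-2}{n-1}$ respectively. Subtracting yields the claimed formula $e(\Fi)=\binom{n+s-2}{n-r-1}-\binom{n+s-2}{n-1}$. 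The hard part is pinpointing the correct two-term decomposition of $h_k(\lambda)$ in terms of the shape data $(r,s,n)$; once the $h$-vector is in hand, the multiplicity calculation is routine binomial arithmetic.
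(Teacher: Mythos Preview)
Your overall strategy matches the paper's: reduce to the initial algebra via the SAGBI/deformation argument, so that the $h$-vector and multiplicity of $\Fi$ coincide with those of $\Fini\cong T/I_2(L_\lambda)$, and then compute there. The paper invokes \cite[Corollary~2.5]{CHV} for this reduction, which is the same content as your appeal to Corollary~\ref{ini F}.

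Where you diverge is in the source of the explicit formula. The paper does not go through \cite{CN} or through a ``rectangle minus staircase'' decomposition of the shape; instead it observes that $T/I_2(L_\lambda)$ is a ladder determinantal ring of $2\times 2$ minors of the one-sided ladder $L_\lambda$, and simply cites Wang's closed formula \cite[Theorem~4.7]{Wang} for the Hilbert series of such rings, which specializes directly to the stated $h_k(\lambda)$. This bypasses the combinatorial verification you flag as ``the main technical step.'' Your heuristic two-term decomposition is plausible and presumably recoverable from \cite{CN} or from Wang's formula a posteriori, but as written it is not a proof; the paper's route avoids this work entirely by recognizing the ring as a known ladder determinantal ring. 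For the multiplicity, the paper again just cites \cite[Corollary~4.2]{Wang1} (or says it follows immediately from the $h$-vector), whereas your Vandermonde--Chu summation is a self-contained and perfectly valid alternative that makes the derivation of $e(\Fi)$ from $p_\lambda(1)$ explicit.
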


\begin{proof} First notice that the $h$-vectors for $\Fi$ and $\Fini$ coincide and in particular, $e(\Fi)=e(\Fini)$ by \cite[Corollary~2.5]{CHV}. Since $\Fini$ is a ladder determinantal ring the formula for the Hilbert series is obtained in \cite[Theorem~4.7]{Wang}. One can then deduce the formula for the multiplicity immediately. The formula for the multiplicity is also worked out explicitly in \cite[Corollary~4.2]{Wang1}. 
\end{proof}

\begin{corollary}
Let $X$ be a sparse $2\times n$ matrix and $I$ the ideal of $2\times 2$ minors of $X$.  Then
\begin{enumerate}[$($a$)$]
\item The regularity of $\Ri$ is ${\rm{reg}}(\Ri)=\min\{n-1,n-r,r+s\}$. 
\item The $a$-invariant of $\Ri$ is $a(\Ri)=\min\{-s-2,-s-r-1,r-n-1\}$.
\item The normalized Hilbert series of $\Ri$ is $p_{\lambda}(z)=1+h_1(\lambda)z+\ldots +h_{r+s}(\lambda)z^{r+s}$, where $h_1(\lambda)=(r+s)(n-r)-\binom{s+1}{2}-1$ and $h_k(\lambda)=\binom{r+s}{k}\binom{n-r}{k}-\binom{s+1}{k+1}\binom{n-1}{k-1}$ for $k\neq 1$, with the convention that $\binom{j}{i}=0$ if $j<i$.
\item The multiplicity of $\Ri$ is $e(\Ri)=\binom{n+s}{n-r}-\binom{n+s}{n+1}-1$.
\end{enumerate}
\end{corollary}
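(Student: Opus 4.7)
The plan is to reduce every statement to the corresponding one for $\Rini$ and then to apply the numerology of ladder determinantal rings. The reduction rests on two facts: Theorem~\ref{initial rees thm} gives $\init(\Ri) = \Rini$, hence by \cite[Corollary~2.5]{CHV} the algebras $\Ri$ and $\Rini$ share the same Hilbert series, and in particular the same multiplicity, regularity, and $a$-invariant; and Proposition~\ref{L L' K' J'}(d) identifies $\Rini$ with the one-sided ladder determinantal ring $S/I_2(L'_\lambda)$, whose Hilbert series is computed in \cite[Theorem~4.7]{Wang}.

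For parts~(a) and~(b), I would first observe that $\dim(\Ri) = \dim R + 1 = n+s+1$, since $R$ has exactly $n+s$ variables and $I$ has positive height. Cohen--Macaulayness of $\Ri$ (Corollary~\ref{rees sing}) then gives the standard identity $\reg(\Ri) = a(\Ri) + \dim(\Ri)$, so parts~(a) and~(b) are logically equivalent. The three candidates $-s-2$, $-s-r-1$, $r-n-1$ for $a(\Rini)$ can be read off either from Wang's Hilbert series or from the ladder $a$-invariant formula \cite[Corollary~9]{GK}; they correspond to the three extremal rectangles of $L'_\lambda$, namely the inner staircase, the leftmost added column of $r+s$ variables, and the topmost added row of $n-r$ variables, respectively. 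Adding $n+s+1$ then recovers the three terms of~(a).

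For~(c), applying \cite[Theorem~4.7]{Wang} to $L'_\lambda$ in place of $L_\lambda$ enlarges the three parameters in Corollary~\ref{eFI} from $(r+s-1,\,n-r-1,\,n-3)$ to $(r+s,\,n-r,\,n-1)$, yielding the stated $h_k(\lambda)$ verbatim for $k \ge 2$. The naive substitution into Wang's formula for $k = 1$ gives $(r+s)(n-r) - \binom{s+1}{2}$, and the extra $-1$ comes from the identity $h_1 = (\text{embedding dimension of } \Rini) - \dim \Rini$: the embedding dimension of $\Rini$ equals the number of variables of $S$, namely $(n+s) + ((r+s)(n-r) - \binom{s+1}{2})$ after counting the non-zero minors $f_{ij}$, and subtracting $n+s+1$ produces the correction. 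Part~(d) follows either by summing $\sum_k h_k(\lambda)$, or by evaluating \cite[Corollary~4.2]{Wang1} directly on the enlarged ladder, the $-1$ arising for the same reason.

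The main obstacle is essentially bookkeeping: matching Wang's conventions for the enlarged ladder $L'_\lambda$, tracking the contributions of the added row and column, and distinguishing the ``staircase'' and ``pure rectangle'' regimes in the $a$-invariant formula. A separate case analysis, along the lines of the proof of Corollary~\ref{invFi}, may also be required at the degenerate boundary $n = r+s$, where the Ferrers diagram has no pure-monomial tail and some of the minima in the formulas coincide.
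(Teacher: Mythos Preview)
Your reduction to $\Rini=S/I_2(L'_\lambda)$ via Theorem~\ref{initial rees thm} and \cite[Corollary~2.5]{CHV} is correct, as is the embedding-dimension computation of $h_1$. But the central step of your argument for~(c) and~(d) does not go through as written. The ladder $L'_\lambda$ is \emph{two-sided}: Definition~\ref{L lambda} and Figure~\ref{fig:3} show that the upper-left corner box is absent. Wang's result \cite[Theorem~4.7]{Wang} is stated and proved only for one-sided ladders, so you cannot apply it to $L'_\lambda$ directly. In fact your parameter substitution $(r+s-1,\,n-r-1,\,n-3)\mapsto(r+s,\,n-r,\,n-1)$ produces the $h$-vector of $K[L_\mu]/I_2(L_\mu)$ for the \emph{one-sided} ladder $L_\mu$ obtained by filling in that missing corner (this is Corollary~\ref{eFI} applied to the partition $\mu$ with parameters $r'=r+1$, $s'=s$, $n'=n+2$), not the $h$-vector of $\Rini$. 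You noticed the discrepancy at $k=1$ and repaired it, but you have given no argument that the two $h$-vectors agree for $k\ge 2$; this is precisely the non-trivial content.

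The paper supplies exactly the missing bridge. Using \cite[Theorem~2.1]{Gorla} it shows that $I_2(L_\mu)$ is obtained from $I_1(L_\lambda)$ by an ascending G-biliaison of height~$1$ on $I_2(L'_\lambda)$, i.e.\ $yI_2(L_\mu)+I_2(L'_\lambda)=fI_1(L_\lambda)+I_2(L'_\lambda)$. Two short exact sequences then yield
\[
HS_{K[L_\mu]/I_2(L_\mu)}(z)=z\,HS_{K[L_\mu]/I_1(L_\lambda)}(z)+(1-z)\,HS_{K[L_\mu]/I_2(L'_\lambda)}(z),
\]
and since $K[L_\mu]/I_2(L'_\lambda)\cong\Rini[v]$ is a polynomial ring in one extra variable over $\Rini$, one solves for the normalized $h$-polynomial of $\Rini$ in terms of that of the one-sided ladder $L_\mu$. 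This is what forces $h_k(\lambda)=h_k(\mu)$ for $k\neq 1$ and $h_1(\lambda)=h_1(\mu)-1$. The same biliaison, combined with \cite[Theorem~3.1]{DG}, gives $\reg(\Rini)=\reg(K[L_\mu]/I_2(L_\mu))$, which is then read off from Corollary~\ref{invFi} applied to~$\mu$; your appeal to \cite{GK} or \cite{Wang} for~(a) faces the same one-sided versus two-sided obstruction. In short, your outline is missing the biliaison step that passes from the two-sided ladder $L'_\lambda$ to the one-sided ladder $L_\mu$.
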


\proof
Let \begin{align*}
\lambda&=(\underbrace{n-r, \ldots, n-r}_{r \ {\rm{ times }}}, n-r-1, n-r-2, \ldots, n-r-s), \\
\mu&=(\underbrace{n-r+1, \ldots, n-r+1}_{r+1 \ {\rm{ times }}}, n-r, n-r-1, \ldots, n-r-s+1),
\end{align*}
and let $I_2(L'_\lambda)$ be the defining ideal of $\Rini$. By~\cite[Theorem~2.1]{Gorla}, the ideal $I_2(L_\mu)$ is obtained from the ideal $I_1(L_\lambda)$ generated by the entries of $L_\lambda$ by an ascending G-biliaison of height 1 on $I_2(L'_\lambda)$. Specifically, one has 
\begin{equation}\label{biliaison}
yI_2(L_\mu)+I_2(L'_\lambda)=fI_1(L_\lambda)+I_2(L'_\lambda),
\end{equation}
where $y$ is the variable appearing in position $(2,2)$ of $L_\mu$ and $f$ is the $2\times 2$-minor of the first two rows and columns of $L_\mu$.

(a) By \cite[Corollary~2.5]{CHV} we have $a(\Rini)=a(\Ri)$ and since $\Rini$ and $\Ri$ are Cohen-Macaulay, then ${\rm{reg}}(\Ri)={\rm{reg}}(\Rini)$. Since ${\rm{reg}}(K[L_\mu]/I_1(L_\lambda))=0$, then $${\rm{reg}}(\Ri)={\rm{reg}}(K[L_\mu]/I_2(L_\mu))=\min\{n-1,n-r,r+s\},$$ where the first equality follows from~\cite[Theorem~3.1]{DG} and the second from Corollary~\ref{invFi}. 

(b) The formula for the $a$-invariant of $\Ri$ now follows from the fact that $a(\Ri)=-\dim \Ri +{\rm{reg}}(\Ri)=\min\{-s-2,-s-r-1,r-n-1\}$. 

We now prove (c) and (d) together. The normalized Hilbert series of $\Ri$ and $\Rini$ coincide and in particular $e(\Ri)=e(\Rini)$ by \cite[Corollary~2.5]{CHV}. Using the short exact sequences
$$0\longrightarrow I_2(L'_\lambda)(-1) \longrightarrow I_2(L_\mu)(-1) \oplus I_2(L'_\lambda) \longrightarrow yI_2(L_\mu)+I_2(L'_\lambda)\longrightarrow 0$$
and
$$0\longrightarrow I_2(L'_\lambda)(-2) \longrightarrow I_1(L_\lambda)(-2) \oplus I_2(L'_\lambda) \longrightarrow fI_1(L_\lambda)+I_2(L'_\lambda)\longrightarrow 0$$ together with (\ref{biliaison}), 
one obtains $$HS_{K[L_\mu]/I_2(L_\mu)}(z)=zHS_{K[L_\mu]/I_1(L_\lambda)}(z)+(1-z)HS_{K[L_\mu]/I_2(L'_\lambda)}(z),$$
where $HS_A(z)$ denotes the Hilbert series of the algebra $A$.
Therefore, by Corollary~\ref{eFI} $$h_1(\lambda)=h_1(\mu)-1=(r+s)(n-r)-\binom{s+1}{2}-1$$ 
and 
$$h_k(\lambda)=h_k(\mu)=\binom{r+s}{k}\binom{n-r}{k}-\binom{s+1}{k+1}\binom{n-1}{k-1} 
\ \mbox{ for }\ k\neq 1,$$ where $1+h_1(\mu)z+\ldots+h_{r+s}(\mu)z^{r+s}$ is the normalized Hilbert series of $K[L_\mu]/I_2(L_\mu)$.
In particular, the multiplicity of $\Rini$ is $$e(\Rini)=e(K[L_\mu]/I_2(L_\mu))-e(K[L_\mu]/J)=\binom{n+s}{n-r}-\binom{n+s}{n+1}-1. \eqed$$

We close with the following remark that gives an alternative path for a proof of our results for special types of $m\times n$ sparse matrices. 

\begin{remark}\label{ASL}
Let $X$ be an $m\times n$ sparse matrix, $m\leq n$. We assume that, after row and column permutations, the variables that appear in $X$ form a two-sided ladder as in Figure~\ref{fig:stair}.

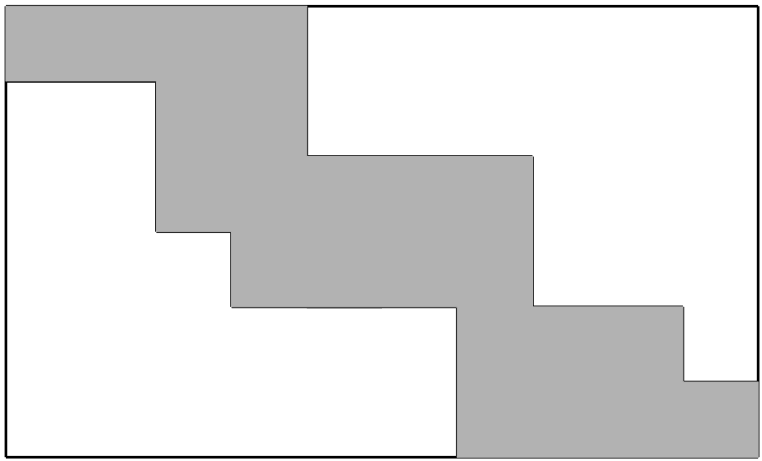
\begin{figure}[h]
	\centering
	\begin{tikzpicture}[ every node/.style={scale=0.9}]
	\draw [line width=1pt, color=black] (0,5)--(10,5);
	\draw [line width=1pt, color=black] (0,-1)--(10,-1);
	\draw [line width=1pt, color=black] (0,-1)--(0,5);
	\draw [line width=1pt, color=black] (10,-1)--(10,5);
	\draw [line width=1pt, color=black] (0,4)--(2,4);
		\draw [line width=1pt, color=black] (4,1)--(6,1);
	\draw [line width=1pt, color=black] (4,3)--(4,5);
	\draw [line width=1pt, color=black] (2,2)--(3,2);
	\draw [line width=1pt, color=black] (2,4)--(2,2);
	\draw [line width=1pt, color=black] (7,1)--(7,3);
		\draw [line width=1pt, color=black] (3,1)--(5,1);
	\draw [line width=1pt, color=black] (3,1)--(3,2);
	\draw [line width=1pt, color=black] (9,1)--(9,0);
	\draw [line width=1pt, color=black] (4,3)--(7,3);
	\draw [line width=1pt, color=black] (7,1)--(9,1);
	\draw [line width=1pt, color=black] (6,1)--(6,-1);
		\draw [line width=1pt, color=black] (9,0)--(10,0);
			\filldraw [gray!60](0,4)--(0,5)--(4,5)--(4,3)--(7,3)--(7,1)--(9,1)--(9,0)--(10,0)--(10,-1)--(6,-1)--(6,1)--(3,1)--(3,2)--(2,2)--(2,3)--(2,4);
	\end{tikzpicture} 
	\caption{A matrix with a shaded two-sided ladder.}
	\label{fig:stair}
\end{figure}

The case of a sparse $2\times n$ matrix $X$ is a special case of the above type of matrix with $m=2$, where the ladder has one lower inside corner in position $(1,r+1)$ and one upper inside corner in position $(2,r+s)$. Let $I=I_m(X)$ and let $\tau$ be a diagonal term order. Notice that an $m\times m$-minor of $X$ is non-zero if and only if all the entries on its diagonal are non-zero. Moreover, the $m\times m$-minors of $X$ form a universal Gr\"obner basis of $I_m(X)$ by~\cite[Proposition~5.4]{Boocher}. It follows that $\ini(I)$ is generated by the products of the elements on the diagonals of the $m\times m$ non-zero minors of $X$.

One can show that $\ini(I)$ is a sortable ideal with respect to the lexicographic order induced by the following order of the variables: $x_{i,j}>x_{k,l}$ if either $i<k$ or $i=k$ and $j<l$. Therefore, the defining ideal of $\Fini$ is the toric ideal generated by the binomial relations obtained by the sorting \cite[Theorem~6.16]{EH}. Moreover, these generators are a quadratic Gr\"{o}bner basis of the defining ideal of $\Fini$. In particular,  the sorting we use allows us to conclude that $\ini(I)$ is a generalized Hibi ideal and  $\Fini$ is a generalized Hibi ring, see \cite{HH05} and \cite[Section 6.3]{EH}. 
Therefore, $\Fini$ is a normal, Cohen-Macaulay, Koszul algebra.

Furthermore,  $\ini(I)$ is a weakly polymatroidal ideal with respect to the same order on the variables as above. For the definition of a weakly polymatroidal ideal, see \cite{KH} or \cite[Definition~6.25]{EH}.  Hence, by \cite[Proposition~6.26]{EH} $\ini(I)$ satisfies the $\ell$-exchange property with respect to the sorting order. Therefore, $\Rini$ is of fiber type and the defining equations of $\Rini$ are precisely the defining equations of the special fiber ring $\Fini$ and the linear relations by \cite[Theorem~6.24]{EH}.

Because of the special shape of our matrix, one can use the criterion in \cite{RS} (see also~\cite[Proposition~1.1]{CHV}) and proceed in a similar manner as in \cite[Theorem~6.46]{EH} to show that the Pl\"{u}cker relations are the defining equations of $\Fi$ and the maximal minors of $X$ are a SAGBI basis of $\Fi$. 
One can also show that Pl\"{u}cker relations along with the linear relations of $\mathcal{S}(I)$ are the defining equations of $\Ri$ and the maximal minors of $X$ along with the variables of $R$ are a SAGBI basis of $\Ri$. In particular $\init(\Ri)=\Rini$ and $\ini(\Fi)=\Fini$, where $\tau^\prime$ is a suitable term order. One then obtains similar results for $\Ri$ and $\Fi$ as in Corollaries~\ref{rees sing},~\ref{Rees koszul},~\ref{ini F}, and ~\ref{FI CM normal}. 
\end{remark}

\acknowledgement{We are grateful to the Women in Commutative Algebra (WICA) group for organizing the first ``Women in Commutative Algebra" workshop at BIRS, Banff, Canada, where this project began. We also thank the staff at BIRS for their great hospitality during the workshop. The WICA workshop was funded by the National Science Foundation grant DMS 1934391 and by the Association for Women in Mathematics grant NSF-HRD 1500481.}

\end{document}